\renewcommand{\Im}{\mathop{\rm Im}}
\newcommand{\Supp}{\mathop{\rm Supp}}
\newcommand{\eps}{\ensuremath{\varepsilon}}
\renewcommand{\hat}{\widehat}
\renewcommand{\tilde}{\widetilde}
\newcommand{\bC}{\ensuremath{\mathbb{C}}}
\newcommand{\bE}{\ensuremath{\mathbb{E}}}
\newcommand{\bN}{\ensuremath{\mathbb{N}}}
\newcommand{\bP}{\ensuremath{\mathbb{P}}}
\newcommand{\bR}{\ensuremath{\mathbb{R}}}
\newcommand{\cL}{\ensuremath{\mathcal{L}}}
\newcommand{\cM}{\ensuremath{\mathcal{M}}}
\newcommand{\cP}{\ensuremath{\mathcal{P}}}
\theoremstyle{plain}
\newtheorem{Thm}{Theorem}[section]
\newtheorem{Prop}[Thm]{Proposition}
\newtheorem{Cor}[Thm]{Corollary}
\theoremstyle{definition}
\newtheorem{Rem}[Thm]{Remark}
\numberwithin{equation}{section}
\renewcommand\section{\@startsection {section}{1}{\z@}%
                                   {-3.5ex \@plus -1ex \@minus -.2ex}%
                                   {2.3ex \@plus.2ex}%
                                   {\normalfont\large\bf}}
\renewcommand\subsection{\@startsection {subsection}{1}{\z@}%
                                   {-3.5ex \@plus -1ex \@minus -.2ex}%
                                   {2.3ex \@plus.2ex}%
                                   {\normalfont\normalsize\bf}}
\begin{document}
	
	\title{\textbf{Reproduction of initial distributions from the first hitting time distribution for birth-and-death processes}\footnote{
	This research was supported by RIMS and by ISM.}}

	\author{Kosuke Yamato\footnote{Graduate School of Science, Kyoto
	University, Japan.}
	\footnote{The research of this author was supported by
	JSPS Open Partnership Joint Research Projects grant no. JPJSBP120209921.}
	\footnote{The research of this author was supported by JSPS KAKENHI Grant Number JP21J11000}
	\quad and \quad Kouji Yano\footnotemark[2] \footnotemark[4]
	\footnote{The research of this author was supported by
	JSPS KAKENHI grant no.'s JP19H01791 and JP19K21834.}}

	\date{}

	\maketitle

	\begin{abstract}
		For birth-and-death processes, we show that every initial distribution is reproduced from the first hitting time distribution.
		The reproduction is done by applying to the distribution function a differential operator defined through the eigenfunction of the generator.
		Using the spectral theory for generalized second-order differential operators, we study asymmetric random walks.
	\end{abstract}

\section{Introduction}






Birth-and-death processes are continuous-time Markov chains on $S = \{ 0,1,2,\cdots \}$ which only jump to neighboring points.
Let us consider a birth-and-death process $X$ with positive birth and death rates on $S \setminus \{ 0 \}$ and stopped at $0$. Let $\nu$ be its initial distribution and $T_0$ be its first hitting time of $0$.
Our aim is to reproduce the initial distribution $\nu$ from the first hitting time distribution $\bP_{\nu}[T_0 \in dt]$,
where $\bP_{\nu}$ denotes the underlying probability measure of $X$ under the initial distribution $\nu$.
We show that the reproduction of the initial distribution can be done via a differential operator obtained from the eigenfunctions of the generator.

A key tool is the spectral theory.
As we will see, there always exist the first hitting time densities $f_i(t) \ (i \geq 1, t > 0)$, that is, there are functions such that $\bP_{i}[T_0 \in dt] = f_{i}(t)dt$. In addition, the density $f_i$ has a spectral representation:
\begin{align}
	f_i(t) = \pi_{i}\int_{0}^{\infty}\mathrm{e}^{-\theta t} \psi_{-\theta }(i)\rho(d\theta), \label{eq130}
\end{align}
where $\pi$ is the speed measure, $\rho$ is the spectral measure and $\psi_{-\theta}$ is a Dirichlet $(-\theta)$-eigenfunction of the generator, whose precise definitions will be given in Section \ref{section:BD}.
As is well-known (e.g., Karlin and McGregor \cite{KarlinMcGregor}), the transition probability has the spectral representation:
\begin{align}
	\bP_{i}[X_t = j] = \pi_{i}\int_{0}^{\infty}\mathrm{e}^{-\theta t} \psi_{-\theta}(i)\psi_{-\theta}(j)\rho(d\theta). \label{eq131}
\end{align} 
Comparing the representations in \eqref{eq130} and \eqref{eq131} and changing the order of the integration and the differentiation formally, we see the following result.
We will introduce a matrix $C$ in Proposition \ref{matrix_C} whose columns are generalized $0$-eigenfunctions of the generator $Q$ in the sense that $C_0 = 0$ and $C_{j-1} = QC_{j}$ for $j \geq 1$.
We write $\partial_t = \frac{d}{dt}$ and define $\psi_{\partial_t}(j)$ by the differential polynomial
\begin{align}
	\psi_{\partial_t}(j) = \sum_{k = 1}^{j}C(j,k)\partial_t^{k-1} \quad (j \geq 1). \label{eq111}
\end{align}

\begin{Thm} \label{rep_transition}
	Let $\nu$ be a probability measure on $S \setminus \{0\}$. Then it holds
	\begin{align}
		\bP_{\nu}[X_t = j] = \psi_{\partial_t}(j)f_{\nu}(t) \quad (j \geq 1), \label{eq113}
	\end{align}
	where 
	\begin{align}
		\bP_{\nu}[X_t = j] := \sum_{i = 1}^{\infty}\nu\{ i \} \bP_{i}[X_t = j]
		\quad \text{and} \quad
		f_{\nu}(t) := \sum_{i = 1}^{\infty} \nu\{i\} f_i(t). \label{}
	\end{align}

\end{Thm}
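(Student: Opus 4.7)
The plan is to apply $\psi_{\partial_t}(j)$ to the spectral representation \eqref{eq130} of $f_i(t)$ and recognize the outcome as the spectral representation \eqref{eq131} of $\bP_i[X_t = j]$. The algebraic heart of the argument is the polynomial identity
\begin{align*}
\psi_{-\theta}(j) = \sum_{k=1}^{j} C(j,k) (-\theta)^{k-1},
\end{align*}
which asserts that the Dirichlet $(-\theta)$-eigenfunction $\psi_{-\theta}(j)$ is a polynomial in $-\theta$ of degree $j-1$ whose coefficient of $(-\theta)^{k-1}$ is the $(j,k)$-entry of $C$. I would first verify this by inserting the polynomial ansatz into the eigenequation $Q\psi_{-\theta} = -\theta\psi_{-\theta}$ and matching powers of $-\theta$; the resulting recursion on the columns is exactly $QC_k = C_{k-1}$ with $C_0 = 0$, which is the defining property of $C$ in Proposition \ref{matrix_C}, while the Dirichlet condition $\psi_{-\theta}(0)=0$ is automatic from the empty-sum convention at $j=0$. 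An immediate consequence is the operator identity
\begin{align*}
\psi_{\partial_t}(j) e^{-\theta t} = \psi_{-\theta}(j) e^{-\theta t} \qquad (\theta > 0,\ t > 0).
\end{align*}

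Granted this, I would apply $\psi_{\partial_t}(j)$ termwise under the spectral integral in \eqref{eq130} to obtain
\begin{align*}
\psi_{\partial_t}(j) f_i(t) = \pi_i \int_0^\infty \psi_{\partial_t}(j) e^{-\theta t} \cdot \psi_{-\theta}(i) \rho(d\theta) = \pi_i \int_0^\infty e^{-\theta t} \psi_{-\theta}(i) \psi_{-\theta}(j) \rho(d\theta) = \bP_i[X_t = j],
\end{align*}
the last equality being \eqref{eq131}. Summing against $\nu$ and passing $\psi_{\partial_t}(j)$ through the series $\sum_i \nu\{i\} f_i(t) = f_\nu(t)$ would then yield \eqref{eq113}.

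The principal obstacle is to justify the two interchanges. Differentiating $e^{-\theta t}$ up to order $j-1$ under the spectral integral introduces a factor $\theta^{k-1}$ with $k \leq j$, and multiplying by $\psi_{-\theta}(i)$ --- itself a polynomial in $\theta$ of degree $i-1$ --- produces an integrand of polynomial order at most $\theta^{i+j-2}$ in $\theta$. The exchange of $\partial_t^{k-1}$ with $\int_0^\infty \cdots \rho(d\theta)$ is then valid by dominated convergence provided the polynomial-moment bound
\begin{align*}
\int_0^\infty \theta^{N} e^{-\theta t} \rho(d\theta) < \infty \qquad (t > 0,\ N \in \bN)
\end{align*}
is available, which follows from the smoothness in $t$ of the transition function \eqref{eq131} and will have been recorded alongside the spectral representation in Section \ref{section:BD}. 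The second interchange --- of $\psi_{\partial_t}(j)$ with the sum $\sum_i \nu\{i\} \cdot$ --- is then straightforward by dominated convergence using the uniform bound $\psi_{\partial_t}(j) f_i(t) = \bP_i[X_t = j] \leq 1$ obtained at the previous step.
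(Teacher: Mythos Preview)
Your approach is the same as the paper's: establish \eqref{eq113} first for point masses via the spectral identities \eqref{eq130}, \eqref{eq131} and the polynomial expansion $\psi_{-\theta}(j)=\sum_{k=1}^{j}C(j,k)(-\theta)^{k-1}$, then sum against $\nu$. The point-mass computation is fine, and your justification for differentiating under the spectral integral (polynomial moments of $\rho$) is reasonable, though the paper simply appeals to Propositions~\ref{eigen} and~\ref{BD-hitting_density} without dwelling on this.

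The gap is in your second interchange. You claim that swapping $\psi_{\partial_t}(j)$ with $\sum_i \nu\{i\}\,\cdot$ is ``straightforward by dominated convergence using the uniform bound $\psi_{\partial_t}(j)f_i(t)=\bP_i[X_t=j]\le 1$''. But $\psi_{\partial_t}(j)=\sum_{k=1}^{j}C(j,k)\partial_t^{k-1}$ is a linear combination of derivatives, and term-by-term differentiation of $\sum_i\nu\{i\}f_i(t)$ requires a uniform-in-$i$ bound on each \emph{individual} $\partial_t^{k-1}f_i(t)$, not merely on the combination. Knowing $|\psi_{\partial_t}(j)f_i(t)|\le 1$ for every $j$ does contain this information, but only after you unwind the triangular system: from $P_t(i,k+2)=\sum_{l=1}^{k+2}C(k+2,l)\partial_t^{l-1}f_i(t)$ and bounds on $\partial_t^{l-1}f_i$ for $l\le k+1$, one solves for $\partial_t^{k+1}f_i$ using $C(k+2,k+2)>0$. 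The paper makes exactly this recursion explicit, constructing constants $\alpha_k$ with $\sup_{i\ge 1,\,t>0}|\partial_t^k f_i(t)|\le\alpha_k$ starting from $\alpha_0=\mu_1$ (since $f_i(t)=\mu_1 P_t(i,1)\le\mu_1$) and then iterating. You should either spell out this inversion or cite it; as written, the dominated-convergence step does not go through.
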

By taking $t \to 0$ in \eqref{eq113}, we obtain the reproduction of the initial distribution:
\begin{Cor} \label{rep_init}
	Let $\nu$ be a probability measure on $S \setminus \{0\}$. Then it holds
	\begin{align}
		\nu\{j\} = \lim_{t \to + 0}\psi_{\partial_t}(j)f_\nu(t) \quad (j \geq 1). \label{eq114}
	\end{align}
\end{Cor}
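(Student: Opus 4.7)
The corollary follows from Theorem \ref{rep_transition} by taking $t \to 0^+$. The plan has only two ingredients: apply the identity \eqref{eq113} to rewrite $\psi_{\partial_t}(j)f_\nu(t)$ as $\bP_\nu[X_t = j]$, and then verify that this transition probability converges to $\nu\{j\}$ as $t \downarrow 0$.

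First I would observe that, by Theorem \ref{rep_transition},
\[
\psi_{\partial_t}(j) f_\nu(t) \;=\; \bP_\nu[X_t = j] \;=\; \sum_{i=1}^\infty \nu\{i\}\, \bP_i[X_t = j]
\]
for every $t > 0$ and every $j \geq 1$, so the corollary reduces to showing that $\lim_{t \to +0} \bP_\nu[X_t = j] = \nu\{j\}$.

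Next I would prove this convergence in two sub-steps. For each fixed $i \geq 1$, the transition probability $\bP_i[X_t = j]$ is the $(i,j)$-entry of the minimal semigroup of the birth-and-death chain, hence by standard continuity of the semigroup at the origin one has $\bP_i[X_t = j] \to \delta_{ij}$ as $t \downarrow 0$ (this is the right-continuity of sample paths starting deterministically from $i$). Since $\bP_i[X_t = j] \leq 1$ and $\sum_i \nu\{i\} = 1 < \infty$, dominated convergence allows the limit to pass through the sum, giving
\[
\lim_{t \to +0} \bP_\nu[X_t = j] \;=\; \sum_{i=1}^\infty \nu\{i\}\, \delta_{ij} \;=\; \nu\{j\},
\]
which is exactly \eqref{eq114}.

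Honestly, there is no serious obstacle here: the work has been done in Theorem \ref{rep_transition}, and the remaining content is the elementary semigroup-continuity fact combined with dominated convergence. The only point that deserves a line of justification is the interchange of the limit with the infinite sum over $i$, since a priori the sum is over all of $S \setminus \{0\}$; the uniform bound $\bP_i[X_t=j] \leq 1$ and the $\nu$-integrability of the constant function settle this cleanly.
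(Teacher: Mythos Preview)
Your proposal is correct and follows exactly the paper's approach: the paper simply remarks that the corollary is obtained ``by taking $t \to 0$'' in \eqref{eq113}, and you have spelled out precisely that step, including the dominated-convergence justification for passing the limit through the sum over $i$. There is nothing to add.
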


To obtain the reproduction formula in the explicit form, we need to compute the spectral measure of the generator.
For this purpose, we look at birth-and-death processes as {\it generalized diffusions},
and apply the spectral theory for the generalized second-order ordinary differential operators.
Using Doob's $h$-transform, we compute the matrix $C$ for asymmetric random walks.

In appendix, we study reproduction of the initial distributions for one-dimensional diffusions. We will show that under the existence of the Laplace transform of the spectral measure reproduction is possible for initial distributions with square integrable densities with respect to the speed measure.
The main tool is the spectral theory, especially the generalized Fourier transform and the spectral representation of the first hitting time densities.


\subsection*{Background of the study}

In \cite{preprintQSD}, we have studied quasi-stationary distributions of one-dimensional diffusions.
Let $X$ be a $\frac{d}{dm}\frac{d}{ds}$-diffusion on $S := [0,b) \ (0 < b \leq \infty)$ stopped at $0$.
Let us denote the set of probability distributions on $S$ by $\cP(S)$ or $\cP S$.
For a set of initial distributions $\cP \subset \cP(S \setminus \{0\})$, we say that the {\it first hitting uniqueness} holds on $\cP$ if the following holds:
\begin{align}
	\cP \ni \mu \mapsto \bP_\mu[T_0 \in dt] \quad \text{is injective}, \label{eq133}
\end{align}
where $T_x$ denotes the first hitting time of $x$ for $x \in S$. 
Recall a probability distribution $\nu$ is called a {\it quasi-stationary distribution} of $X$ when the following holds:
\begin{align}
	\bP_{\nu}[X_t \in dx \mid T_0 > t] = \nu(dx) \quad (t > 0). \label{}
\end{align}
Define
\begin{align}
	\cP_{\mathrm{exp}} := \{ \mu \in \cP(0,b) \mid \bP_\mu[T_0 \in dt] = \lambda \mathrm{e}^{-\lambda t}dt \quad \text{for some } \lambda > 0 \}.
\end{align}
One of the main results in \cite{preprintQSD} was the following:
\begin{Thm}[{\cite[Theorem 1.1]{preprintQSD}}]\label{main-theorem-03}
	Let $X$ be a $\frac{d}{dm}\frac{d}{ds}$-diffusion on $[0,b) \ (0 < b  \leq \infty)$
	stopped at $0$ satisfying
	\begin{align}
		\bP_x[T_0 < \infty] = 1 
		\quad \text{and} \quad
		\bP_x[T_y < \infty] > 0 \quad (x \in (0,b), y \in [0,b)). \label{}
	\end{align}
	Set
	\begin{align}
		\mu_{t}(dx) := \bP_{\mu}[X_t \in dx \mid T_0 > t]. \label{}
	\end{align}
	Assume the first hitting uniqueness holds on $\cP_{\mathrm{exp}}$ and
	\begin{align}
		{\bP_{\nu}}[T_0 \in dt] = \lambda \mathrm{e}^{-\lambda t}dt \quad \text{for some}
		\ \lambda > 0 \ \text{and some}\  {\nu} \in \cP(0,b).
	\end{align}
	Then for $\mu \in \cP(0,b)$ and $\lambda > 0$, the following are equivalent:
	\begin{enumerate}
		\item $
		\lim_{t \to \infty}\frac{\bP_{\mu}[T_0 > t + s]}{\bP_{\mu}[T_0 > t]} = \mathrm{e}^{-\lambda s} \ (s > 0)$.
		\item $\bP_{\mu_{t}}[T_0 \in ds] \xrightarrow[t \to \infty]{} \lambda \mathrm{e}^{-\lambda s}ds$.
		\item $\mu_t \xrightarrow
		[t \to \infty]{} {\nu}$.
	\end{enumerate}
\end{Thm}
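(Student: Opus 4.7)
My plan is to establish the three-way equivalence by proving (i)$\Leftrightarrow$(ii) directly from the Markov property, (iii)$\Rightarrow$(ii) from continuity of the hitting-time map, and (ii)$\Rightarrow$(iii) via a tightness/subsequence argument, with the first hitting uniqueness on $\cP_{\mathrm{exp}}$ playing the decisive role in the last implication.

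For (i)$\Leftrightarrow$(ii), I would apply the Markov property at time $t$ conditional on $\{T_0 > t\}$ to obtain
\begin{align}
\bP_{\mu_t}[T_0 > s] = \bP_{\mu}[T_0 > t+s \mid T_0 > t] = \frac{\bP_{\mu}[T_0 > t+s]}{\bP_{\mu}[T_0 > t]}.
\end{align}
Thus (i) is exactly the pointwise convergence of the survival functions $\bP_{\mu_t}[T_0 > s] \to e^{-\lambda s}$, which is equivalent to the weak convergence in (ii) because the exponential distribution function is continuous everywhere. For (iii)$\Rightarrow$(ii), the map $x \mapsto \bE_x[f(T_0)]$ is continuous on $(0,b)$ for every bounded continuous $f\colon [0,\infty) \to \bR$, a standard property of regular one-dimensional diffusions under the stated assumptions $\bP_x[T_0 < \infty]=1$ and $\bP_x[T_y < \infty] > 0$; integrating against $\mu_t$ and letting $t \to \infty$ then yields the weak convergence of $\bP_{\mu_t}[T_0 \in ds]$ to $\bP_\nu[T_0 \in ds] = \lambda e^{-\lambda s} ds$.

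The substantive implication is (ii)$\Rightarrow$(iii), which I would handle by subsequences. First I would establish tightness of $\{\mu_t : t \geq 1\}$ in $\cP(0,b)$. Near $0$, the convergence $\bP_{\mu_t}[T_0 \leq \delta] \to 1 - e^{-\lambda \delta}$, which is small for small $\delta$, combined with $\bP_x[T_0 \leq \delta] \to 1$ as $x \to 0$, prevents $\mu_t$ from concentrating at $0$. Near $b$ (the nontrivial case being $b = \infty$), the symmetric statement $\bP_{\mu_t}[T_0 > M] \to e^{-\lambda M}$ together with $\bP_x[T_0 > M] \to 1$ as $x \to b$ prevents escape to the boundary. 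Given tightness, any subsequential weak limit $\mu_\infty \in \cP(0,b)$ satisfies $\bP_{\mu_\infty}[T_0 \in ds] = \lambda e^{-\lambda s}\, ds$ by the continuity invoked in the previous paragraph, so $\mu_\infty \in \cP_{\mathrm{exp}}$. The assumed first hitting uniqueness on $\cP_{\mathrm{exp}}$, combined with the fact that $\nu \in \cP_{\mathrm{exp}}$ realizes this same exponential law, forces $\mu_\infty = \nu$; as every subsequential limit equals $\nu$, the full convergence (iii) follows.

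The main obstacle is the tightness step. Quantifying $\bP_x[T_0 \leq \delta] \to 1$ as $x \to 0$ uniformly enough in $\delta$, and, when $b = \infty$, the analogous boundary estimate for $\bP_x[T_0 > M]$, is where the structural hypotheses on the diffusion (regularity of the endpoints, almost-sure finite hitting time of $0$, and irreducibility) are essential to prevent mass from leaking to the boundary along subsequences. Once these estimates are secured, the remainder is a clean application of first hitting uniqueness.
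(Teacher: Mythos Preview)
The paper does not contain a proof of this theorem. It is quoted verbatim as \cite[Theorem 1.1]{preprintQSD} in the ``Background of the study'' subsection, introduced by the sentence ``One of the main results in \cite{preprintQSD} was the following,'' and is used only to motivate why first hitting uniqueness is worth studying. No argument for it appears anywhere in the present manuscript, so there is nothing here to compare your proposal against.

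That said, your outline is the natural one and is essentially the strategy one would expect in \cite{preprintQSD}: the equivalence (i)$\Leftrightarrow$(ii) via the Markov-property identity $\bP_{\mu_t}[T_0>s]=\bP_\mu[T_0>t+s]/\bP_\mu[T_0>t]$ is immediate, (iii)$\Rightarrow$(ii) follows from continuity of $x\mapsto\bP_x[T_0\in\cdot]$, and (ii)$\Rightarrow$(iii) is the substantive step handled by tightness plus subsequential identification through first hitting uniqueness on $\cP_{\mathrm{exp}}$. You have correctly flagged tightness near the boundary $b$ as the technical crux. One point to be careful about: your argument that mass cannot escape to $b$ relies on $\bP_x[T_0>M]\to 1$ as $x\to b$, which is true under the stated hypotheses but deserves justification (it uses that $b$ is not an exit boundary, which follows from $\bP_x[T_0<\infty]=1$ together with the diffusion structure). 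If you intend to submit this as a self-contained proof rather than a sketch, that boundary analysis would need to be filled in.
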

Here the convergence of probability distributions is in the sense of the weak convergence.
From Theorem \ref{main-theorem-03}, we can see that if there is a quasi-stationary distribution $\nu_\lambda$ with $\bP_{\nu_\lambda}[T_0 > t] = \mathrm{e}^{-\lambda t}$, the convergence $\mu_{t} \xrightarrow[t \to \infty]{w}\nu_\lambda$ is reduced to the tail behavior of $T_0$.

Rogers \cite{RogersFPP} has studied the first hitting uniqueness on $\cP(0,b)$ for one-dimensional diffusions and gave a sufficient condition for it by a condition on the resolvent density. 
His condition was, however, too strong; he gave in the paper an example of a diffusion satisfying the first hitting uniqueness but not his condition. 

Reproduction of initial distributions for a set $\cP \subset \cP(S)$ obviously implies the first hitting uniqueness on $\cP$. More precisely, we can see that the reproduction formula \eqref{eq114} provides the inverse of the map \eqref{eq133}.

\subsection*{Outline of the paper}
	The remainder of the present paper is organized as follows.
	In Section \ref{section:BD}, we will recall some basic notion and setup notation for birth-and-death processes.
	In Section \ref{section:0-eigenfunc}, we will explain the scale function and speed measure for birth-and-death processes and construct the matrix $C$.
	In Section \ref{section:theta_eigenfunc}, we will show the equality \eqref{eq111} and see that the first hitting time densities have the spectral representations.
	In Section \ref{section:proof_rep_formula}, we will prove Theorem \ref{rep_transition}.
	In Section \ref{section:GeneralizedDiff}, we will see birth-and-death processes as generalized diffusions and apply the spectral theory and $h$-transforms for the processes.
	In Section \ref{section:exBD}, we will discuss symmetric and asymmetric random walks.
	In Appendix \ref{section:1-dimDiff}, we will study the reproduction of the initial distributions for one-dimensional diffusions. We will also give examples.

\section{Birth-and-death processes} \label{section:BD}

Let us consider a {\it birth-and-death process} on $S = \{ 0,1,2,\cdots \}$ such that $0$ is a trap, which is a continuous-time Markov chain with neighboring jumps such that $0$ is a trap.
We denote the birth rates by $\{\lambda_i\}_{i \geq 1}$ and the death rates by $\{ \mu_i \}_{i \geq 1}$:
\begin{align}
	\bP_{i}[X_{\tau} = i+1] = \frac{\lambda_{i}}{\lambda_{i}+ \mu_{i}}, \quad
	\bP_{i}[X_{\tau} = i-1] = \frac{\mu_{i}}{\lambda_{i}+ \mu_{i}}
	\quad \text{and} \quad
	\bP_{i}[\tau > t] = \mathrm{e}^{-(\lambda_{i} + \mu_{i})}
	\label{}
\end{align}
where $\tau = \inf \{ t > 0 \mid X_{\tau} \neq X_0 \}$ denotes the first exit time from the initial state.
We set $\mu_0 = \lambda_0 = 0$ according to the assumption that $0$ is a trap.
We assume the birth and death rates are positive on $S \setminus \{ 0 \}$: $\lambda_{i} > 0,\ \mu_{i} > 0 \ (i \geq 1)$. 
Its $Q$-matrix 
\begin{align}
	Q = (Q(i,j))_{i,j \geq 0} = 
	\begin{pmatrix}
		Q(0,0) & Q(0,1) & \cdots \\
		Q(1,0) & Q(1,1) & \cdots \\
		\vdots & \vdots & \ddots 
	\end{pmatrix}
\end{align}
is given by
\begin{align}
	Q(i,i-1) = \mu_i \quad (i \geq 1) \quad \text{and} \quad Q(i,i+1) = \lambda_i, \quad Q(i,i) = -(\lambda_i + \mu_i) \quad (i \geq 0). \label{q-matrix}
\end{align}
We denote the transition probability by $P_t$, that is,
\begin{align}
	\bP_i[X_t = j] = P_t(i,j) \quad (t \geq 0,\ i,j \geq 0). \label{}
\end{align}
For this process, the transition probability $P_t(i,j)$ satisfies the forward and backward equations: $\partial_t P_t =  P_t Q = Q P_t$, that is, 
\begin{align}
	\partial_t P_t(i,j) &= \lambda_{j-1}P_t(i,j-1) - (\lambda_j + \mu_j)P_t(i,j) + \mu_{j+1} P_t(i,j+1) \quad (i,j \geq 0), \label{F-eq} \\
	\partial_t P_t(i,j) &= \mu_{i}P_t(i-1,j) - (\lambda_i + \mu_i)P_t(i,j) + \lambda_{i} P_t(i+1,j) \quad (i,j \geq 0), \label{B-eq}
\end{align}
where we understand $\lambda_{-1}P_t(i,-1) = 0$ (see e.g., Anderson \cite[Chapter 2, Theorem 2.2]{Anderson}).

Define the {\it speed measure} $\pi = (\pi_i)_{i \in S \setminus \{0\}}$ on $S \setminus \{0\}$ by
\begin{align}
	\pi_1 = 1, \quad \pi_{i} = \frac{\lambda_1 \cdots \lambda_{i-1}}{\mu_2 \cdots \mu_{i}} \quad (i \geq 2). \label{speed-measure}
\end{align}
Note that $\pi$ is characterized with $\pi_1 = 1$ and the following balancing condition:
\begin{align}
	\pi_{i+1} \mu_{i+1} = \pi_{i} \lambda_i \quad (i \geq 1). \label{balancing_condition}
\end{align} 
Define the operator $Q$ by 
\begin{align}
	Qf(i) = \sum_{j = 1}^{\infty}Q(i,j)f(j) = \mu_i f(i-1) - (\lambda_i + \mu_i)f(i) + \lambda_i f(i+1) \quad (i \geq 0) \label{Q-operator}
\end{align}
for a function $f: S \to \bR$, where we understand $\mu_0 f(-1) = 0$.
Then $Q$ is symmetric w.r.t.\ $\pi$ in the sense that for functions $f,g: S \to \bR$ which are finitely supported and satisfy $f(0) = g(0) = 0$ the following holds:
\begin{align}
	\sum_{i = 1}^{\infty}(Qf)(i)g(i)\pi_{i} = \sum_{i = 1}^{\infty}f(i)(Qg)(i)\pi_{i}. \label{}
\end{align} 
Indeed, by \eqref{balancing_condition} it holds
\begin{align}
	&\sum_{i = 1}^{\infty}(Qf)(i)g(i)\pi_{i} \label{} \\
	= &\sum_{i = 1}^{\infty}(\mu_i f(i-1) - (\lambda_i + \mu_i)f(i) + \lambda_i f(i+1))g(i)\pi_{i} \label{} \\
	 = &\sum_{i = 1}^{\infty}\mu_i f(i-1)g(i)\pi_{i} - \sum_{i = 1}^{\infty}(\lambda_i + \mu_i)f(i)g(i)\pi_{i} + \sum_{i = 1}^{\infty}\lambda_i f(i+1)g(i)\pi_{i} \label{} \\
	 = &\sum_{i = 1}^{\infty}\lambda_i f(i)g(i+1)\pi_{i} - \sum_{i = 1}^{\infty}(\lambda_i + \mu_i)f(i)g(i)\pi_{i} + \sum_{i = 1}^{\infty}\mu_i f(i)g(i - 1)\pi_{i} \label{} \\
	 = &\sum_{i = 1}^{\infty}f(i)(Qg)(i)\pi_{i}. \label{}
\end{align}

\section{Generalized $0$-eigenfunctions} \label{section:0-eigenfunc}

Let us recall the spectral representation of the transition probability $P_t$ studied in Karlin and McGregor \cite{KarlinMcGregor}.
We introduce the scale function of $Q$:
\begin{Prop} \label{prop:scale_function}
	The scale function of $Q$ defined by
	\begin{align}
		s(i) := \left\{
		\begin{aligned}
			&0 & (i = 0), \\
			&\frac{1}{\mu_1} + \sum_{j = 1}^{i-1}\frac{1}{\pi_{j}\lambda_{j}} & (i \geq 1)
		\end{aligned}
		\right. \label{scale-function}
	\end{align}
	satisfies $Qu = 0$ and $u(0) = 0$.
	Conversely, if $u$ is a function satisfying $Qu = 0$ and $u(0) = 0$, then $u$ is a constant multiple of $s$.
\end{Prop}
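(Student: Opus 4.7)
The plan is two-part: first, a direct verification that the given formula for $s$ satisfies $Qu=0$ with $u(0)=0$, and second, a first-order recurrence argument showing uniqueness up to scalar multiple.

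For the verification, the boundary value $s(0)=0$ is immediate from the definition. For $i\geq 1$, since $Qu(i)=\mu_i u(i-1)-(\lambda_i+\mu_i)u(i)+\lambda_i u(i+1)$, the equation $Qs(i)=0$ rearranges to $\lambda_i(s(i+1)-s(i))=\mu_i(s(i)-s(i-1))$. The definition of $s$ gives $s(i+1)-s(i)=1/(\pi_i\lambda_i)$ for every $i\geq 1$, so the left-hand side equals $1/\pi_i$. For the right-hand side with $i\geq 2$, one has $s(i)-s(i-1)=1/(\pi_{i-1}\lambda_{i-1})$, and the required identity $\pi_i\mu_i=\pi_{i-1}\lambda_{i-1}$ is precisely the balancing condition \eqref{balancing_condition}. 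For $i=1$, one computes $s(1)-s(0)=1/\mu_1$, and the identity reduces to $\mu_1\cdot(1/\mu_1)=1/\pi_1$, which holds because $\pi_1=1$. Finally, at $i=0$ the equation $Qs(0)=0$ is automatic since $\lambda_0=\mu_0=0$.

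For the converse, suppose $u$ satisfies $Qu=0$ and $u(0)=0$. Introducing the first differences $\Delta(i):=u(i+1)-u(i)$, the relation $Qu(i)=0$ for $i\geq 1$ rearranges to $\lambda_i\Delta(i)=\mu_i\Delta(i-1)$, i.e.\ $\Delta(i)=(\mu_i/\lambda_i)\Delta(i-1)$. Iterating from $\Delta(0)=u(1)-u(0)=u(1)$ gives $\Delta(i)=u(1)\prod_{k=1}^{i}\mu_k/\lambda_k$ for $i\geq 1$, and telescoping yields $u(i)=u(1)\bigl(1+\sum_{j=1}^{i-1}\prod_{k=1}^{j}\mu_k/\lambda_k\bigr)$. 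The definition of $\pi_j$ in \eqref{speed-measure} gives $\prod_{k=1}^{j}\mu_k/\lambda_k=\mu_1/(\pi_j\lambda_j)$, so $u(i)=u(1)\mu_1\,s(i)$, proving that $u$ is a constant multiple of $s$.

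There is no real obstacle here: the whole proposition reduces to the balancing condition \eqref{balancing_condition} together with the fact that $Qu=0$ is a two-term linear recurrence on the first differences $\Delta(i)$, leaving only one free parameter once $u(0)=0$ is imposed. The slight bookkeeping subtlety is the special treatment of the index $i=1$, where $s(1)-s(0)=1/\mu_1$ rather than the generic $1/(\pi_{i-1}\lambda_{i-1})$; handling this case separately is what pins down the constant $1/\mu_1$ appearing in the definition of $s$.
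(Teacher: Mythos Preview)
Your proof is correct and follows essentially the same approach as the paper: direct verification via the balancing condition \eqref{balancing_condition}, followed by the observation that $Qu=0$ together with $u(0)=0$ determines $u$ up to the single free parameter $u(1)$. The only cosmetic difference is that for uniqueness the paper argues abstractly (subtracting a suitable multiple of $s$ to obtain a solution $v$ with $v(0)=v(1)=0$, hence $v\equiv 0$), whereas you solve the first-difference recurrence $\lambda_i\Delta(i)=\mu_i\Delta(i-1)$ explicitly and identify the result with $u(1)\mu_1\, s$.
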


\begin{proof}
	From the definition of $s$, it holds
	\begin{align}
		Qs(i) &= \mu_i s(i-1) - (\lambda_i + \mu_i)s(i) + \lambda_i s(i+1)  \label{} \\
		&= -\frac{\mu_{i}}{\pi_{i-1}\lambda_{i-1}} + \frac{\lambda_{i}}{\pi_{i}\lambda_{i}} = 0. \label{}
	\end{align}
	
	Let $u$ be another function satisfying $u(0) = 0$ and
	\begin{align}
		Qu(i) = \mu_i u(i-1) - (\lambda_i + \mu_i)u(i) + \lambda_i u(i+1) = 0 \quad (i \geq 1). \label{eq125}
	\end{align}
	Set $c = u(1)$. If $c = 0$, from \eqref{eq125}, it inductively holds $u(i) = 0 \ (i \geq 0)$. 
	If $c \neq 0$, set $v = s - (1/c)u$. Then $v$ satisfies $Qv = 0$ and $v(0) = v(1) = 0$.
	Thus it follows that $v(i) = 0 \ (i \geq 0)$, and we obtain $u= cs$. 
\end{proof}

We introduce two difference operators $D_\pi$ and $D_s$ whose composition is the $Q$-matrix.
\begin{Prop}
Define
\begin{align}
	D_{\pi} f(i) := \frac{f(i) - f(i-1)}{\pi_{i}} \quad(i \geq 1)\quad \text{and} \quad D_s f(i) := \frac{f(i+1) - f(i)}{s(i+1) - s(i)} \quad (i \geq 0). \label{dmds} 
\end{align}
Then it holds $Qf(i) = D_{\pi} D_sf(i) \ (i\geq 1)$ for every positive measurable function $f : S \to \bR$.
\end{Prop}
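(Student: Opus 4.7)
The plan is a direct computation, unwinding the definitions of $D_\pi$, $D_s$, and $s$, and then applying the balancing condition \eqref{balancing_condition}.

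First I would compute the scale increments. From \eqref{scale-function}, for $i \geq 1$ we have $s(i+1)-s(i) = \frac{1}{\pi_i \lambda_i}$. For the ``downward'' increment $s(i)-s(i-1)$, the case $i \geq 2$ gives $s(i)-s(i-1) = \frac{1}{\pi_{i-1}\lambda_{i-1}}$, and the case $i = 1$ gives $s(1)-s(0) = \frac{1}{\mu_1}$. Using $\pi_1 = 1$ and the balancing condition \eqref{balancing_condition} in the form $\pi_{i-1}\lambda_{i-1} = \pi_i \mu_i$ (valid for $i \geq 2$, and also trivially for $i = 1$ since $\pi_1 \mu_1 = \mu_1$), I can write uniformly
\begin{align}
	s(i)-s(i-1) = \frac{1}{\pi_i \mu_i} \quad (i \geq 1). \n
\end{align}

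Next I would plug these into the composition. By definition of $D_\pi$ applied to $D_s f$, for $i \geq 1$,
\begin{align}
	D_\pi D_s f(i)
	&= \frac{D_s f(i) - D_s f(i-1)}{\pi_i} \n \\
	&= \frac{1}{\pi_i}\rbra{\frac{f(i+1)-f(i)}{s(i+1)-s(i)} - \frac{f(i)-f(i-1)}{s(i)-s(i-1)}} \n \\
	&= \frac{1}{\pi_i}\rbra{\pi_i\lambda_i \bigl(f(i+1)-f(i)\bigr) - \pi_i\mu_i\bigl(f(i)-f(i-1)\bigr)} \n \\
	&= \mu_i f(i-1) - (\lambda_i + \mu_i)f(i) + \lambda_i f(i+1), \n
\end{align}
which is exactly $Qf(i)$ by \eqref{Q-operator}.

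There is essentially no obstacle: the only care needed is the boundary case $i=1$ in the definition of $s$, where the first summand $1/\mu_1$ appears separately from the rest of the series; the normalization $\pi_1 = 1$ absorbs this into the uniform formula $s(i)-s(i-1) = 1/(\pi_i \mu_i)$, after which the calculation proceeds without cases. Note also that $D_s f(0) = (f(1)-f(0))/(s(1)-s(0))$ is well-defined, so $D_\pi D_s f$ is meaningful starting from $i=1$, matching the domain stated in the proposition.
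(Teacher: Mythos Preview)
Your proof is correct and follows essentially the same direct computation as the paper's own proof: both compute $D_s f(i)=\pi_i\lambda_i(f(i+1)-f(i))$ from the scale increments, apply $D_\pi$, and use the balancing condition \eqref{balancing_condition} to convert $\pi_{i-1}\lambda_{i-1}$ into $\pi_i\mu_i$. If anything, your treatment of the boundary case $i=1$ (verifying $s(1)-s(0)=1/\mu_1=1/(\pi_1\mu_1)$ so that the uniform formula $s(i)-s(i-1)=1/(\pi_i\mu_i)$ holds) is slightly more explicit than the paper's, which simply writes $\pi_{i-1}\lambda_{i-1}$ without comment at $i=1$.
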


\begin{proof}
It holds from \eqref{balancing_condition}
\begin{align}
	D_{\pi} D_sf(i) &= D_{\pi}(\pi_{i}\lambda_i(f(i+1) - f(i)) \label{} \\
	&= \frac{\pi_{i}\lambda_i(f(i+1) - f(i)) - \pi_{i-1}\lambda_{i-1}(f(i) - f(i-1))}{\pi_{i}} \label{} \\
	&= \lambda_i(f(i+1) - f(i)) - \mu_{i}(f(i) - f(i-1)) \label{} \\
	&= Qf(i). \label{}
\end{align}
\end{proof}

We introduce a matrix $C$ which we will use in Proposition \ref{eigen} to represent an eigenfunction of $Q$.

\begin{Prop} \label{matrix_C}
	There exists a unique matrix 
	\begin{align}
		C = (C(i,j))_{i,j \geq 0} = (C_0, C_1, \cdots) 
		\quad \text{with} \quad 
		C_j = (C(i,j))_{i \geq 0} = 
		\begin{pmatrix}
			C(0,j) \\
			C(1,j) \\
			\vdots
		\end{pmatrix}
		\quad (j \geq 0)
		\label{matrixC}
	\end{align}
	which satisfies the relation
	\begin{align}
		Q C_{j} = C_{j-1} \quad (j \geq 1) \label{rec-rel}
	\end{align}
	with 
	\begin{align}
		C_0 = 0,\quad C_1(i) = s(i) \quad (i \geq 1) \quad \text{and} \quad  C(i,j) = 0 \quad  (i < j), \label{}
	\end{align}
	where $s$ is the scale function given in \eqref{scale-function}.
\end{Prop}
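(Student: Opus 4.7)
The plan is to construct $C$ column by column by induction on $j$, treating the relation $QC_j = C_{j-1}$ as a three-term linear recurrence in the row index $i$. For the base cases, set $C_0 = 0$ and $C_1(i) = s(i)$ for $i \geq 0$; since $s(0) = 0$ by definition, the triangularity constraint $C(0,1) = 0$ is satisfied, and Proposition~\ref{prop:scale_function} gives $QC_1 = Qs = 0 = C_0$.

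For the inductive step, assume $C_{j-1}$ has been constructed with the required properties. Rearranging the identity $QC_j(i) = C_{j-1}(i)$ yields
\begin{align*}
	C(i+1, j) = \frac{C(i, j-1) + (\lambda_i + \mu_i)C(i, j) - \mu_i C(i-1, j)}{\lambda_i}, \quad i \geq 1,
\end{align*}
which is well-defined since $\lambda_i > 0$. Impose the initial values $C(0,j) = C(1,j) = 0$ (consistent with triangularity, as $j \geq 2$) and iterate. A nested induction on $i$ then checks full triangularity: if $C(k, j) = 0$ for all $k < i < j$, the recurrence reduces to $C(i,j) = C(i-1,j-1)/\lambda_{i-1}$, and $C(i-1, j-1) = 0$ by the outer inductive hypothesis since $i - 1 < j - 1$. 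At $i = j$ the same recurrence yields $C(j,j) = C(j-1, j-1)/\lambda_{j-1}$, the first nonzero entry of the new column; meanwhile $QC_j = C_{j-1}$ holds at $i = 0$ automatically, since both sides vanish there.

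Uniqueness follows from Proposition~\ref{prop:scale_function}: any two admissible columns $C_j, C'_j$ have difference $u := C'_j - C_j$ satisfying $Qu = 0$ and $u(0) = 0$, hence $u = c \cdot s$ for some constant $c$; the extra condition $u(1) = 0$ (coming from $C(1,j) = 0$ when $j \geq 2$, or from the explicit normalization $C_1(1) = s(1)$ when $j = 1$) together with $s(1) = 1/\mu_1 \neq 0$ forces $c = 0$. The construction itself is mechanical once the boundary conditions are identified; the main subtlety is confirming that imposing only $C(0,j) = C(1,j) = 0$ together with the recurrence automatically propagates the full triangular pattern $C(i,j) = 0$ for all $i < j$, which is precisely what the nested induction captures.
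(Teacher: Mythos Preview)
Your proof is correct and follows essentially the same inductive construction as the paper: both rearrange $QC_j = C_{j-1}$ into the three-term recurrence $C(i+1,j) = \lambda_i^{-1}\bigl(C(i,j-1) + (\lambda_i+\mu_i)C(i,j) - \mu_i C(i-1,j)\bigr)$ and use it to determine $C_j$ from $C_{j-1}$, arriving at the same diagonal formula $C(j,j) = C(j-1,j-1)/\lambda_{j-1}$. Your version is in fact a bit more complete than the paper's, since you explicitly verify the full triangular pattern via the nested induction and spell out the uniqueness argument through Proposition~\ref{prop:scale_function}, whereas the paper simply starts the recurrence at $i = j-1$ (taking the zeros for $i < j$ as given) and leaves uniqueness implicit.
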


\begin{Rem}
	Note that $C_j$ is a generalized $0$-eigenfunction of rank $j$ in the sense that
	$Q^{j}C_{j} = 0$ and $Q^{j-1}C_{j} = s \neq 0$.
\end{Rem}

\begin{proof}[Proof of Proposition \ref{matrix_C}]
	The matrix $C$ must satisfy
	\begin{align}
		C(i,j-1) &= \sum_{k=0}^{\infty}Q(i,k)C(k,j) \label{} \\
		&= \mu_i C(i-1,j) - (\lambda_i + \mu_i) C(i,j) + \lambda_i C(i+1,j) \quad (i \geq 1, j \geq 0). \label{}
	\end{align}
	Let us determine $C_j$ recursively.
	For $j = 2,3,4,\cdots$, being fixed, suppose $C_{j-1}$ has been determined.
	For $i \geq j - 1$, we have
	\begin{align}
		C(i+1,j) = \frac{1}{\lambda_i} 
		(C(i,j-1) -  \mu_i C(i-1,j) + (\lambda_i + \mu_i) C(i,j)). \label{eq120} 
	\end{align}
	Substituting $i=j-1, j$ in \eqref{eq120}, we see that $C(j,j)$ and $C(j+1,j)$ are determined as
	\begin{align}
		C(j,j) &= \frac{C(j-1,j-1)}{\lambda_{j-1}}, \label{eq110} \\
		C(j+1,j) &= \frac{1}{\lambda_j} 
		(C(j,j-1) + (\lambda_j + \mu_i) C(j,j)), \label{eq122} 
	\end{align}
	respectively. 
	By \eqref{eq120}, the entries $C(i,j)$ for $i \geq j$ are determined recursively.
\end{proof}

\section{$\theta$-eigenfunctions and spectral representations} \label{section:theta_eigenfunc}

The eigenfunctions for $Q$ can be obtained as the generating functions corresponding to the matrix $C$.
\begin{Prop} \label{eigen}
For $\theta \in \bR$, define
\begin{align}
	\psi_{\theta}(i) = \sum_{j=1}^{\infty}C(i,j)\theta^{j-1} =  \sum_{j=1}^{i}C(i,j)\theta^{j-1}. \label{eq103}
\end{align}
Then $u = \psi_{\theta}$ is the unique solution of the following equation:
\begin{align}
	Qu = \theta u \quad \text{with} \ u(0) = 0, \ D_s u(0) = 1. \label{eq107}
\end{align}
\end{Prop}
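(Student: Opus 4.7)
The plan is to handle uniqueness and existence separately.

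For uniqueness, I would observe that the eigenvalue equation $Qu = \theta u$ for $i \geq 1$ reads
\[
	\lambda_i u(i+1) = (\lambda_i + \mu_i + \theta) u(i) - \mu_i u(i-1),
\]
which is a two-term recursion with nonvanishing leading coefficient $\lambda_i > 0$. Thus $u$ on all of $S$ is determined once $u(0)$ and $u(1)$ are fixed, and the boundary conditions $u(0) = 0$ together with $D_s u(0) = \mu_1 u(1) = 1$, i.e.\ $u(1) = 1/\mu_1$, pin both values down.

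For existence, I would verify that the function $\psi_\theta$ defined in \eqref{eq103} does the job. Since $C(i,j) = 0$ for $i < j$, the sum reduces to a finite one, $\psi_\theta(i) = \sum_{j=1}^{i} C(i,j) \theta^{j-1}$, so no convergence issues arise. The boundary behavior is immediate from Proposition \ref{matrix_C}: $\psi_\theta(0) = 0$ because $C(0,j) = 0$ for every $j \geq 1$; and $\psi_\theta(1) = C(1,1) = s(1) = 1/\mu_1$, so $D_s \psi_\theta(0) = \psi_\theta(1)/s(1) = 1$.

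For the eigenvalue identity, I would interchange $Q$ with the (finite) summation in $j$ and invoke the defining relation $QC_j = C_{j-1}$ from Proposition \ref{matrix_C}:
\[
	Q \psi_\theta(i) = \sum_{j=1}^{\infty} \theta^{j-1} (Q C_j)(i) = \sum_{j=1}^{\infty} \theta^{j-1} C(i, j-1) = \theta \sum_{k=1}^{\infty} \theta^{k-1} C(i,k) = \theta \psi_\theta(i),
\]
where the $k = 0$ term is dropped because $C_0 = 0$.

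The one point that deserves care is the interchange in the first equality. Because $C(i\pm 1, j)$ and $C(i,j)$ all vanish once $j > i+1$, each of the three sums appearing in $\mu_i \psi_\theta(i-1) - (\lambda_i + \mu_i) \psi_\theta(i) + \lambda_i \psi_\theta(i+1)$ is a finite sum in $j$, and combining them term by term is purely algebraic. I expect this to be the only mildly delicate step, and it needs nothing beyond bookkeeping at small indices (e.g.\ $i = 1$, where the term involving $C(i-1, j)$ is absent for $j \geq 1$).
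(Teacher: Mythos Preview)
Your proposal is correct and follows essentially the same route as the paper: both verify the boundary data directly from $C(1,1)=s(1)=1/\mu_1$, establish $Q\psi_\theta=\theta\psi_\theta$ by pushing $Q$ through the finite sum and invoking $QC_j=C_{j-1}$ with $C_0=0$, and obtain uniqueness from the two-term recursion (the paper phrases this as subtracting two solutions, which is the same argument). Your explicit remark that the $j$-sums are finite, so the interchange is purely algebraic, is a welcome clarification the paper leaves implicit.
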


\begin{proof}
	It is obvious that $\psi_{\theta}(0) = 0$.
	It holds
	\begin{align}
		D_s\psi_{\theta}(0) = \mu_{1}(\psi_{\theta}(1) - \psi_{\theta}(0)) = \mu_1 C(1,1) = 1. \label{}
	\end{align}
	Note that for
	\begin{align}
		\psi_\theta = (\psi_{\theta}(i))_{i \geq 0} =  
		\begin{pmatrix}
			\psi_{\theta}(0) \\
			\psi_{\theta}(1) \\
			\vdots
		\end{pmatrix}
		\quad \text{and} \quad
		r_{\theta} := (r_{\theta}(i))_{i \geq 0} =  
		\begin{pmatrix}
			0 \\
			1 \\
			\theta \\
			\theta^2 \\
			\vdots
		\end{pmatrix}, \label{}
	\end{align}
	it holds
	\begin{align}
		\psi_\theta = C r_{\theta} = \sum_{j=1}^{\infty}C_j\theta^{j-1} \label{}. \label{}
	\end{align}
	Thus from \eqref{rec-rel} we have
	\begin{align}
		Q\psi_{\theta} = \sum_{j=1}^{\infty}QC_{j} \theta^{j-1} =  \sum_{j=1}^{\infty}C_{j-1}\theta^{j-1} = \theta\sum_{j=1}^{\infty}C_j\theta^{j-1} = \theta \psi_{\theta}, \label{}
	\end{align}
	where we note that $C_0 = 0$.
	Hence the function $\psi_{\theta}$ is the solution of \eqref{eq107}. 

	We check the uniqueness. Suppose $u = f$ satisfies the equation \eqref{eq107}.
	Then $v := f - \psi_{\theta}$ satisfies $Qv = \theta v$ with $v(0) = 0$ and $D_sv(0) = 0$, which implies $v(1) = 0$.
	It follows from the definition of $Q$ that $v(i) = 0 \ (i \geq 2)$.
	Thus it follows $f = \psi_{\theta}$.
\end{proof}

From the spectral theory (see e.g., Karlin and McGregor \cite[p.501]{KarlinMcGregor}), there exists a measure $\rho$ on $[0,\infty)$, which we call the spectral measure of $Q$, such that 
\begin{align}
	\sum_{i = 1}^{\infty}f(i)^2\pi_i = \int_{0}^{\infty}\hat{f}(\theta)^2\rho(d\theta), \label{}
\end{align}
for every finitely supported function $f$ with $f(0) = 0$,
where $\hat{f}$ is the {\it generalized Fourier transform} of $f$:
\begin{align}
	\hat{f}(\theta) = \sum_{i = 1}^{\infty}f(i)\psi_{-\theta}(i)\pi_i. \label{}
\end{align}
The map $f \mapsto \hat{f}$ can be extended to the unitary map between $L^2(\pi)$ and $L^2(\rho)$ and the functions $\{ \psi_{-\theta}(i)\}_{i \geq 1}$ comprise an orthogonal basis of $L^2(\rho)$ and satisfies
\begin{align}
	\frac{\delta_{ij}}{\pi_{j}} = \int_{0}^{\infty}\psi_{-\theta}(i)\psi_{-\theta}(j)\rho(d\theta) \quad (i,j \geq 1), \label{Orthogonal}
\end{align}
where $\delta_{ij}$ is the Kronecker delta.
We now have the spectral representation of the transition probability $P_t$:
\begin{align}
	\frac{1}{\pi_{j}}P_t(i,j) = \int_{0}^{\infty}\mathrm{e}^{-\theta t}\psi_{-\theta}(i)\psi_{-\theta}(j)\rho(d\theta) \quad (i,j \geq 1, t \geq 0). \label{BD-density}
\end{align}


From \eqref{BD-density}, we show the spectral representation of the first hitting time densities at $0$ as follows:
\begin{Prop} \label{BD-hitting_density}
	For $i \geq 1$, it holds
	\begin{align}
		(f_i(t) :=)\ \partial_t\bP_i[T_0 \leq t] =  \pi_{i}\int_{0}^{\infty}\mathrm{e}^{-\theta t} \psi_{-\theta}(i)\rho(d\theta). \label{hitting-density_BD}
	\end{align}
\end{Prop}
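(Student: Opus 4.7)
The plan is to reduce the hitting time distribution to a single entry of the transition probability and then invoke the spectral representation \eqref{BD-density}.

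First I would exploit the fact that $0$ is a trap: we have $\{T_0 \leq t\} = \{X_t = 0\}$ almost surely under $\bP_i$, so $\bP_i[T_0 \leq t] = P_t(i,0)$. In particular, the differentiability in $t$ and the existence of a density $f_i$ are automatic from the regularity of the semigroup. Next I would apply the Kolmogorov forward equation \eqref{F-eq} at $j = 0$. Since $\lambda_{-1}P_t(i,-1) = 0$ and $\lambda_0 = \mu_0 = 0$ (because $0$ is a trap), the equation collapses to the clean identity
\begin{align}
f_i(t) \;=\; \partial_t P_t(i,0) \;=\; \mu_1 P_t(i,1). \n
\end{align}

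Second, I would substitute the spectral representation \eqref{BD-density} specialized to $j = 1$, namely
\begin{align}
P_t(i,1) \;=\; \pi_1 \int_0^\infty \mathrm{e}^{-\theta t} \psi_{-\theta}(i)\psi_{-\theta}(1) \rho(d\theta), \n
\end{align}
and then evaluate the two constants coming from the normalization of the eigenfunction at the boundary. By the definition \eqref{speed-measure} we have $\pi_1 = 1$, and by \eqref{eq103} together with Proposition \ref{matrix_C} we have $\psi_{-\theta}(1) = C(1,1) = s(1) = 1/\mu_1$. Plugging these into the previous display and multiplying by $\mu_1$ yields the asserted identity.

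The only point that requires a moment's care is justifying that the integral can be treated as an ordinary absolutely convergent integral and that the derivative $\partial_t$ may be applied term by term. This is covered by the general theory of Karlin and McGregor cited around \eqref{BD-density}, which guarantees that the spectral measure $\rho$ has sufficient decay at infinity for the exponentially damped integrand $\mathrm{e}^{-\theta t}\psi_{-\theta}(i)$ to be integrable for every $t > 0$; no further work is needed and the proof is essentially a direct substitution.
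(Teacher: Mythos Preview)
Your argument is exactly the paper's: identify $\bP_i[T_0\le t]=P_t(i,0)$, apply the forward equation at $j=0$ to get $\partial_t P_t(i,0)=\mu_1 P_t(i,1)$, then plug in the spectral representation \eqref{BD-density} and simplify using $\psi_{-\theta}(1)=s(1)=1/\mu_1$. The only discrepancy is a normalization constant: following \eqref{BD-density} literally with $j=1$ gives the prefactor $\pi_1=1$, whereas the stated formula and the paper's own proof carry $\pi_i$; this is an internal inconsistency of the paper rather than a flaw in your reasoning.
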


\begin{proof}
	Note that $\bP_i[T_0 \leq t] = \bP_i[X_t = 0]$.
	From \eqref{F-eq} with $\lambda_0 = \mu_0 = 0$, \eqref{BD-density} and $\psi_{-\theta}(1) = C(1,1) = s(1) = 1/\mu_{1}$, we have for $i \geq 1$
	\begin{align}
		\partial_t P_t(i,0) &= \mu_1 P_t(i,1) \label{eq116} \\
		&= \pi_{i}\mu_1\int_{0}^{\infty}\mathrm{e}^{-\theta t}\psi_{-\theta}(i)\psi_{-\theta}(1)\rho(d\theta) \label{} \\
		&= \pi_{i}\int_{0}^{\infty}\mathrm{e}^{-\theta t}\psi_{-\theta}(i)\rho(d\theta). \label{BD-hitting-density}
	\end{align}
\end{proof}

\section{Proof of the reproduction formula} \label{section:proof_rep_formula}

We prove Theorem \ref{rep_transition}.

\begin{proof}[Proof of Theorem \ref{rep_transition}]
	Let us first consider the case where $\nu$ is a point mass.
	From Proposition \ref{eigen} and \ref{BD-hitting_density}, we can see for $i \geq 1$ that
	\begin{align}
		\psi_{\partial_t}(j)f_{i}(t) &= \sum_{k = 1}^{j}C(j,k){\partial_t}^{k-1}\int_{0}^{\infty}\mathrm{e}^{-\theta t}\psi_{-\theta}(i)\rho(d\theta) \label{} \\
		&= \sum_{k = 1}^{j}C(j,k)\int_{0}^{\infty}\mathrm{e}^{-\theta t}(-\theta)^{k-1}\psi_{-\theta}(i)\rho(d\theta) \label{} \\
		&= \int_{0}^{\infty}\mathrm{e}^{-\theta t}\psi_{-\theta}(i)\psi_{-\theta}(j)\rho(d\theta). \label{}
	\end{align}
	Thus \eqref{eq111} holds when $\nu$ is a point mass.
	
	Let us consider the general case.
	Suppose we may find some constants $\{\alpha_k\}_{k \geq 0}$ such that
	\begin{align}
		\max_{i \geq 1}|\partial^k_t f_i (t)| \leq \alpha_k \quad (t > 0). \label{eq112}
	\end{align}
	Then we have
	\begin{align}
		\sum_{i=1}^{\infty}\nu\{i\}|\partial^k_t f_i (t)| \leq \alpha_k, \label{}
	\end{align}
	and we can change the order of
	the differentiation w.r.t.\ $t$ and the integration by $\nu$:
	\begin{align}
		\sum_{i=1}^{\infty}\nu\{i\}\partial^k_t f_i (t) = \partial^k_t \sum_{i=1}^{\infty}\nu\{i\} f_{\nu} (t). \label{}
	\end{align}
	Then it follows that
	\begin{align}
		\psi_{\partial_t}(j)f_{\nu}(t) = \sum_{i = 1}^{\infty}\nu\{i\}\psi_{\partial_t}(j)f_{i}(t) = \bP_{\nu}[X_t = j]. \label{}
	\end{align}
	Here the last equality follows from \eqref{eq113} for a point mass.
	
	Let us find
	such a sequence $\{\alpha_k\}_{k \geq 0}$ as \eqref{eq112} is satisfied.
	We construct it recursively.
	From \eqref{eq116}, it holds
	\begin{align}
		f_i(t) = \mu_{1}P_t(i,1) \leq \mu_{1}, \label{}
	\end{align}	
	and we may take $\alpha_{0} := \mu_{1}$.
	Let $k \geq 0$ and assume we have constants $\alpha_{0},\alpha_{1}, \cdots, \alpha_{k}$ satisfying \eqref{eq112}. 
	Then from \eqref{eq113} in the case when $\nu$ is a point mass, we have 
	\begin{align}
		C(k+2,k+2){\partial_t}^{k+1}f_i(t) = P_t(i,k+2) -\sum_{l=1}^{k+1}C(k+2,l){\partial_t}^{l-1}f_i(t) \label{}
	\end{align}
	for every $i \geq 1$. 
	Thus it follows
	\begin{align}
				C(k+2,k+2)|{\partial_t}^{k+1}f_i(t)| \leq 1 + \sum_{l=1}^{k+1}C(k+2,l)\alpha_{l-1}. \label{}
	\end{align}
	From \eqref{eq110} it holds $C(k+2,k+2) > 0$ and hence it holds
	\begin{align}
		|{\partial_t}^{k+1}f_i(t)| \leq \frac{1 + \sum_{l=1}^{k+1}C(k+2,l)\alpha_{l-1}}{C(k+2,k+2)}, \label{}
	\end{align}
	and we may take 
	\begin{align}
		\alpha_{k+1} := \frac{1 + \sum_{l=1}^{k+1}C(k+2,l)\alpha_{l-1}}{C(k+2,k+2)}. \label{}
	\end{align}
	The proof is complete.
\end{proof}

\section{Looking at birth-and-death processes as generalized diffusions} \label{section:GeneralizedDiff}

Generalized diffusions are the processes which unify birth-and-death processes and one-dimensional diffusions.
Here we recall some studies of generalized diffusions and consider the application to birth-and-death processes.
A main reference for the subject is Kotani and Watanabe \cite{KotaniWatanabe}.

We say that $w: [0,\infty) \to [0,\infty]$ is a string when it is non-decreasing and right-continuous. Set $\ell(w) := \inf \{x \in \bR \mid w(x) = \infty\}$.
For a string $w$, we define the measure $dw$ on $[0,\ell(w))$ by $dw(a,b] = w(b) - w(a) \ (0 \leq a < b)$ and $dw\{0\} = w(0)$. 

Let $w$ be a string with and let $B$ be a standard Brownian motion and let $L(t,x)$ be the jointly continuous local time of $B$, that is, for every non-negative measurable function $f$ it holds
\begin{align}
	\int_{0}^{t}f(B_s)ds = 2\int_{\bR}f(x)L(t,x)dx. \label{}
\end{align}
Define
\begin{align}
	A(t) := \left\{
	\begin{aligned}
		&\int_{0}^{\ell(w)}L(t,x)dw(x) & (t < T_0 \wedge T_b), \\
		&\infty & (t \geq T_0 \wedge T_b).
	\end{aligned}
	\right.
	\label{time-change}
\end{align}
Then the process $X(t) := B(A^{-1}(t))$ is a strong Markov process on $I$ stopped at the boundaries. We call $X$ the $\frac{d}{dw}\frac{d}{dx}${\it -generalized diffusion} and $dw$ the {\it speed measure} of $X$.
Note that if the $dw$ has a full support in $(0,b)$, the process $X$ is a diffusion, and if $dw$ is supported on $\bN$, the process $X$ is a birth-and-death process.

It is known that the transition density of $X$ has the spectral representation. There exists a jointly continuous function $p(t,x,y)$ 
and a Radon measure $\rho_{w}$ on $(0,\infty)$ such that
\begin{align}
	\bP_x[X_t \in dy] = p(t,x,y)dw(y) \quad (t > 0,x,y \in (0,b)), \label{}
\end{align}
and
\begin{align}
	p(t,x,y) = \int_{0}^{\infty}\mathrm{e}^{-\lambda t}\psi_{-\lambda}(x)\psi_{-\lambda}(y)\rho_{w}(d\lambda) \quad  (t > 0,x,y \in (0,b)), \label{diffusion_transition}
\end{align}
(see e.g., McKean \cite{McKean:elementary}).
We call the measure $\rho_{w}$ the spectral measure of $\frac{d}{dw}\frac{d}{dx}$.

\subsection{Strings and spectral measures} \label{section:krein_theory}

For a string $w$, its dual string $w^{-1}$ defined by
\begin{align}
	w^{-1}(x) := \inf \{ y > 0 \mid w(y) > x \} \quad (x \geq 0) \label{}
\end{align}
is also a string.

We denote by $\rho_{w}$ (resp. $\sigma_{w}$) the spectral measure of $\frac{d}{dw}\frac{d}{dx}$ with Dirichlet (resp. Neumann) boundary condition at $0$, and if the boundary $\ell(w)$ is regular, we assume the Dirichlet boundary condition at $\ell(w)$. 
Then it is known that the following relation holds: 
\begin{align}
	\rho_{w}(d\lambda) = \lambda \sigma_{w^{-1}}(d\lambda)  \quad \text{on} \ [0,\infty) \label{spectral-correspondence}
\end{align}
(see Yano \cite[Theorem 2.2]{Yano:Excusionmeasure}).
Thus we can obtain the spectral measure from the one corresponding to the dual string.
This relation is useful because $\sigma_{w}$ can be sometimes obtained by the spectral theory, which we will explain below.

Let $w$ be a string.
For $\lambda \in \bC$, define $u = \varphi_\lambda$ as the unique solution of the following equation: 
\begin{align}
	u(x) = 1 + \lambda \int_{0}^{x}dy\int_{0-}^{y}u(z)dw(z) \quad (x \geq 0), \label{phi}
\end{align}
and define $u = \psi_{\lambda}$ as the unique solution of the following equation:
\begin{align}
	u(x) = x + \lambda \int_{0}^{x}dy\int_{0}^{y}u(z)dw(z) \quad (x \geq 0). \label{psi}
\end{align}
Define the spectral characteristic function $h$ by
\begin{align}
	h(\lambda) := \int_{0}^{\ell(w)}\frac{dx}{\varphi_\lambda(x)^2} \quad (\lambda  > 0). \label{}
\end{align}
Note that the following equality holds:
\begin{align}
	\int_{0}^{\ell(w)}\frac{dx}{\varphi_\lambda(x)^2} = \lim_{x \to \ell(w)}\frac{\psi_{\lambda}(x)}{\varphi_\lambda(x)}. \label{eq115}
\end{align}
Indeed, since the functions $\varphi_\lambda$ and $\psi_{\lambda}$ are $\lambda$-eigenfunctions for $\frac{d}{dw}\frac{d^+}{dx}$, it holds
\begin{align}
	d\left(\left(\frac{d^+}{dx}\psi_{\lambda}\right)\varphi_\lambda - \left(\frac{d^+}{dx}\varphi_\lambda\right)\psi_{\lambda}\right) = 0, \label{}
\end{align}
and it is clear that $\left.\left(\frac{d^+}{dx}\psi_{\lambda}\right)\varphi_\lambda - \left(\frac{d^+}{dx}\varphi_\lambda\right)\psi_{\lambda}\right|_{x=0} = 1$.
Thus $\left(\frac{d^+}{dx}\psi_{\lambda}\right)\varphi_\lambda - \left(\frac{d^+}{dx}\varphi_\lambda\right)\psi_{\lambda} = 1$.
Thus it follows
\begin{align}
	d\left(\frac{\psi_{\lambda}}{\varphi_\lambda}\right) = \frac{(\frac{d^+}{dx}\psi_{\lambda})\varphi_\lambda - (\frac{d^+}{dx}\varphi_\lambda)\psi_{\lambda}}{\varphi_\lambda^2}dx = \frac{dx}{\varphi_\lambda^2}, \label{}
\end{align}
and \eqref{eq115} holds.
By the spectral theory for generalized second-order differential operators, it is known that the function $h$ is represented by
\begin{align}
	h(\lambda) = c + \int_{0-}^{\infty}\frac{\sigma_{w}(d\xi)}{\lambda + \xi} 
	\quad (\lambda > 0) \label{char_func}
\end{align} 
for $c = \inf \Supp dw$. Here we note that $\sigma_{w}\{ 0 \} = 1 / w(\infty)$ (see \cite[p.239]{KotaniWatanabe}). 
Thus we can obtain $\sigma_{w}$ by the Stieltjes inversion formula:
\begin{align}
	\sigma_{w}(I) = -\frac{1}{\pi}\lim_{\eps \to + 0} \int_{I}\Im h(-\lambda + i \eps)d \lambda \quad \text{for }I \subset [0,\infty) \text{ with } \sigma_{w}(\partial I) = 0. \label{} 
\end{align} 

\subsection{Spectral measures of birth-and-death processes} \label{section:dual_process}

Let us consider a birth-and-death process $X$ whose $Q$-matrix is given by \eqref{q-matrix}. To apply the theory of generalized diffusions, we need a string which defines the generalized diffusion equivalent to $X$. 
For the scale function $s$ defined in \eqref{scale-function}, we denote its linear interpolation by the same symbol:
\begin{align}
	s(x) := (i+1-x)s(i) + (x-i)s(i+1) \quad \text{for} \quad i \geq 0 \quad \text{such that} \quad i \leq x < i+1. \label{}
\end{align}
Define a string $m$ on $(0,\infty)$ by
\begin{align}
	m(x) = \sum_{i=1}^{\infty}\pi_{i} 1 \{ i \leq x \} \quad (x > 0), \label{}
\end{align}
where $\pi_{i}$ is given in \eqref{speed-measure}.
Then 
\begin{align}
	w := m \circ s^{-1} \label{stringmBD}
\end{align}
defines a string and the generalized diffusion $\tilde{X}$ corresponding to $w$
has its state space $\{ s(i) \}_{i \geq 0}$.

We check that the generalized diffusion $s^{-1}(\tilde{X})$ is a realization of $X$.
Recalling that the generalized diffusions are obtained by the time change of a Brownian motion by $A^{-1}$ for $A$ in \eqref{time-change}, we can see
\begin{align}
	\bP_{s(i)}[\tilde{X}_{\tau} = s(i+1)] &= \frac{1/(\pi_{i-1}\lambda_{i-1})}{1/(\pi_{i}\lambda_{i}) + 1/(\pi_{i-1}\lambda_{i-1})} = \frac{\lambda_{i}}{\lambda_{i} + \mu_{i}} \quad (i \geq 1), \label{eq127} \\
	\bP_{s(i)}[\tilde{X}_{\tau} = s(i-1)] &= 1 - \bP_{s(i)}[\tilde{X}_{\tau} = s(i+1)] = \frac{\mu_{i}}{\lambda_{i} + \mu_{i}} \quad (i \geq 1), \label{eq128}
\end{align}
where $\tau := \inf \{t > 0 \mid \tilde{X}_t \neq \tilde{X}_0\}$ denotes the time of the first jump (see e.g., \cite[Theorem 23.7]{Kallenberg} and \cite[Proposition II.3.8]{RevusYor}).
Since $\tau$ is exponentially distributed, we can specify the distribution by the mean. Recall the following well-known formula (see e.g., \cite[Lemma 23.10]{Kallenberg}):
\begin{align}
	\bE_{x}\left[\int_{0}^{T_{a}\wedge T_{b}}f(\tilde{X}_t)dt\right] = \int_{\Supp dw \cap (a,b)}\frac{(x \wedge y - a)(b - x \vee y ) }{b-a}f(y)dw(y) \label{} \\
	 (a,b,x \in \Supp dw, a < x < b), \nonumber
\end{align}
where $f$ is non-negative measurable function.
Applying this formula for $x = s(i),\ a = s(i-1),\ b = s(i+1)$ and $f = 1$, we have
\begin{align}
	\bE_{s(i)}\tau &= \frac{(s(i) - s(i-1))(s(i+1)- s(i)) }{s(i+1) - s(i-1)}\pi_i = (\lambda_{i} + \mu_{i})^{-1} \quad (i \geq 1). \label{eq129}
\end{align}
Thus from \eqref{eq127}, \eqref{eq128} and \eqref{eq129}, we see that the process $\tilde{X}$ is equivalent to $X$.


From \eqref{spectral-correspondence} and \eqref{char_func}, we can obtain the spectral measure $\rho_{w}$ of $\tilde{X}$ (or $X$) through the eigenfunctions of $\frac{d}{dw^{-1}}\frac{d^{+}}{dx}$.
Since the eigenfunctions of $\frac{d}{dw^{-1}}\frac{d^{+}}{dx}$ can be computed from those of $\frac{d}{dw}\frac{d^{+}}{dx}$, we have a formula to compute $\rho_{w}$ from the eigenfunctions of $\frac{d}{dw}\frac{d^+}{dx}$. Here we go into the details.

It is not difficult to see that the dual string $w^{-1}$ of $w$ is equal to $s \circ m^{-1}$ and for $x \geq 0$ we have
\begin{align}
	w^{-1}(x) = s(i) \quad \text{for} \quad i \geq 1 \quad \text{such that} \quad m(i-1) \leq x < m(i). \label{}
\end{align}
Let $\varphi^{(-)}_\lambda$ and $\psi^{(-)}_{\lambda}$ be functions given in \eqref{phi} and $\eqref{psi}$ for $w = w^{-1}$, respectively.
Then as we have seen in the previous section, it holds
\begin{align}
	\lim_{x \to \ell(w^{-1})}\frac{\psi^{(-)}_{\lambda}(x)}{\varphi^{(-)}_{\lambda}(x)} &= \int_{0-}^{\infty}\frac{\sigma_{w^{-1}}(d\xi)}{\lambda + \xi} \label{} \\
	 &= \frac{1}{\lambda\ell(w)} + \int_{0}^{\infty}\frac{\rho_{w}(d\xi)}{\xi(\lambda + \xi)} \quad (\lambda > 0), \label{}
\end{align}
where we used the obvious fact $w^{-1}(\infty) = \ell(w)$.
Note that $\inf \Supp dw^{-1} = 0$ since $w^{-1}(0) = s(1) > 0$.
We consider the eigenfunctions of $\frac{d}{dx}\frac{d}{dw}$, which we denote by $u=\varphi^{d}_{\lambda}$, $v=\psi^{d}_{\lambda} \ (\lambda \in \bC)$, defined as the unique solutions of the following equations, respectively:
\begin{align}
	u(x) = 1 + \lambda \int_{0}^{x}dw(y)\int_{0}^{y}u(z)dz \quad (x \geq 0) \label{}
\end{align}
and
\begin{align}
	v(x) = w(x) + \lambda \int_{0}^{x}dw(y)\int_{0}^{y}v(z)dz.  \quad (x \geq 0). \label{}
\end{align}
It may not be difficult to check that
\begin{align}
	\varphi^{(-)}_{\lambda}(w(x)) = \varphi^{d}_{\lambda}(x) \quad \text{and} \quad \psi^{(-)}_{\lambda}(w(x)) = \psi^{d}_{\lambda}(x)  \quad (x \geq 0). \label{} 
\end{align}
We may also easily see that
\begin{align}
	\varphi^{d}_{\lambda}(x) = \psi^{+}_{\lambda}(x) \quad \text{and} \quad \psi^{d}_{\lambda}(x) = \frac{1}{\lambda}\varphi^{+}_{\lambda}(x) \quad (x \geq 0), \label{}
\end{align}
where $\varphi_\lambda$ and $\psi_{\lambda}$ are the solutions of \eqref{phi} and \eqref{psi} for $w$ in \eqref{stringmBD}, respectively.

Eventually, it follows for $\lambda> 0$
\begin{align}
	\lim_{x \to \ell(w)}\frac{\varphi^{+}_{\lambda}(x)}{\psi^{+}_{\lambda}(x)} = \lim_{x \to \ell(w)}\frac{\lambda\psi^{d}_\lambda(x)}{\varphi^{d}_{\lambda}(x)} = \lim_{x \to \ell(w)}\frac{\lambda\psi^{(-)}_{\lambda}(w(x))}{\varphi^{(-)}_{\lambda}(w(x))}
	=\frac{1}{\ell(w)} + \lambda \int_{0}^{\infty}\frac{\rho_{w}(d\xi)}{\xi(\lambda+\xi)}. \label{exitKrein}
\end{align}
Thus, we obtain the Stieltjes transform of $\xi^{-1}\rho_{w}(d\xi)$ through the eigenfunctions $\varphi_{\lambda}$ and $\psi_{\lambda}$. By the Stieltjes inversion formula, we can compute $\rho_{w}$ in some cases. See Section \ref{section:exBD} for such examples.

\subsection{Doob's $h$-transform}

Here we recall some basic properties of $h$-transform for generalized diffusions (see e.g., Takemura and Tomisaki \cite{TakemuraTomisaki:htransform} for details).
Since we are interested in the first hitting time of $0$, we restrict our attention to the case of generalized diffusions corresponding to a string $w$ with $\ell(w) = \infty$.

Let $w$ be a string and let $\gamma \geq 0$.
Suppose $k_{\gamma}$ be a positive $\gamma$-eigenfunction for $\frac{d}{dw}\frac{d^+}{dx}$, that is, $k_{\gamma}(x) > 0 \ (x > 0)$ and
$u = k_{\gamma}$ is the unique solution of the following equation:
\begin{align}
	u(x) = a + b(x - c) + \gamma \int_{c}^{x}dy\int_{c-}^{y}u(z)dw(z) \quad (x > 0) \label{integral-eq}
\end{align}
for some $a, b \in \bR$ and $c > 0$.
Here we note that for $\gamma \geq 0$ the function 
\begin{align}
	g_\gamma(x) := \psi_{\gamma}(x)\int_{x}^{\infty} \frac{dy}{\psi_{\gamma}(y)^2} \quad (x \geq 0) \label{}
\end{align}
is a unique non-increasing solution of \eqref{integral-eq} satisfying
\begin{align}
	u(0) = 1, \quad \lim_{x \to \infty}\frac{d^+}{dx}u(x) = 0 \label{}
\end{align}
(see e.g., It\^o \cite{Ito_essentials} for details).

The $h$-transform $\cL^{[k_{\gamma}]}$ of $ \cL = \frac{d}{dw}\frac{d}{dx}$ by $k_{\gamma}$ is defined by
\begin{align}
	\cL^{[k_{\gamma}]} := \frac{1}{k_{\gamma}}\left(\frac{d}{dw}\frac{d}{dx} - \gamma \right)k_{\gamma}, \label{}
\end{align}
and we can easily check that
\begin{align}
	\cL^{[k_{\gamma}]} = \frac{d}{dw^{[k_{\gamma}]}}\frac{d}{ds^{[k_{\gamma}]}} \label{}
\end{align}
for
\begin{align}
	dw^{[k_{\gamma}]} = k_{\gamma}^2 dw, \quad ds^{[k_{\gamma}]} = k_{\gamma}^{-2} dx. \label{eq117}
\end{align}
Note that when $k_{\gamma}(0) > 0$, the boundary classification for the boundary $0$ is not changed by the transform. As for the boundary $\infty$, it is entrance or natural depending on
\begin{align}
	\int_{1}^{\infty}k_{\gamma}(x)^{-2}dx\int_{x}^{\infty}k_{\gamma}(x)^2dw(x) \label{}
\end{align}
is finite or not.

Let us consider the case of $k_{\gamma}(0) = 1$. When we denote the functions $\psi_{\lambda}$ and $g_\lambda$ for $\cL^{[k_{\gamma}]} \ ( \gamma \geq 0)$ by $\psi_{\lambda}^{[k_{\gamma}]}$ and $g_\lambda^{[k_{\gamma}]}$,
it holds
\begin{align}
	\psi_{\lambda}^{[k_{\gamma}]} = \frac{\psi_{\lambda + \gamma}}{k_{\gamma}}, \quad g_\lambda^{[k_{\gamma}]} = \frac{g_{\lambda + \gamma}}{k_{\gamma}} \quad (\lambda \in \bC). \label{eigen_shift}
\end{align}
Moreover, for the spectral measure $\rho_{w^{[k_{\gamma}]}}$ of $\cL^{[k_{\gamma}]}$ is given by
\begin{align}
	\rho_{w^{[k_{\gamma}]}}(d\lambda) = \rho_{w}(d(\lambda - \gamma)) \quad \text{on} \ (\gamma, \infty). \label{spectral_shift}
\end{align}
Thus the $h$-transformed transition probability subject to the absorbing boundary at $0$ and the first hitting time densities are
\begin{align}
	P^{[k_\gamma]}_t(x,dy) = \mathrm{e^{-\gamma t}}\frac{k_{\gamma}(y)}{k_\gamma(x)} P_t(x,dy)
	\quad \text{and} \quad
	f^{[k_{\gamma}]}_x(t) = \frac{\mathrm{e}^{-\gamma t}}{k_{\gamma}(x)}f_x(t). \label{h-transformed_density}
\end{align}

\subsection{$h$-transform for birth-and-death processes}

Let $w$ be a string such that $dw$ is supported on a discrete countable set $\{a_i\}_{i \geq 1} \quad (0 <  a_1 < a_2 < \cdots)$; $dw(x) = \sum_{i = 1}^{\infty}\pi_{i}\delta_{a_i} \quad (\pi_{i} > 0)$. 
As we have seen in Section \ref{section:dual_process}, the generalized diffusion corresponding to $w$ is equivalent to a birth-and-death process.

For $\gamma \geq 0$, let $k_\gamma$ be a $\gamma$-eigenfunction for $\frac{d}{dw}\frac{d}{dx}$:
\begin{align}
	k_\gamma(x)  =a + bx + \gamma \int_{0}^{x}dy\int_{0}^{y}k_\gamma(z)dw(z) \quad (x > 0) \label{eq126}
\end{align}
for some constants $a, b\in \bR$.
Note that since the boundary $0$ for birth-and-death processes is always regular, all the $\gamma$-eigenfunction may be represented of the form \eqref{eq126}.

Set $a_0 := 0$. The function $k_\gamma$ is linear on each interval $[a_i,a_{i+1}] \ (i = 0,1,2,\cdots)$. Indeed, for $i = 0,1,2,\cdots,$ and $\delta \in [0,(a_{i+1} - a_i))$, since $dw$ is supported on $\{ a_i \}_{i \in \bN}$, it holds
\begin{align}
	k_{\gamma}(a_i + \delta) - k_{\gamma}(a_i)
	&= b \delta + \gamma \int_{a_i}^{a_i+\delta}dy\int_{0}^{y}k_\gamma(z)dw(z) \label{} \\
	&=b\delta 
	+ \gamma \int_{a_i}^{a_i+\delta}dy\int_{0}^{a_i}k_\gamma(z)dw(z) \label{} \\
	&=b\delta 
	+ \gamma \delta \int_{0}^{a_i}k_\gamma (z)dw(z) \label{} \\
	&=\left( b + \gamma \int_{0}^{a_i}k_\gamma(z)dw(z) \right)\delta. \label{}
\end{align} 
Suppose $k_{\gamma}(x) > 0$ for all $x \geq 0$.
For the scale function $s^{[k_\gamma]}$ defined by \eqref{eq117}, we have
\begin{align}
	s^{[k_{\gamma}]}(a_{i+1}) - s^{[k_{\gamma}]}(a_i) &= \int_{a_i}^{a_{i+1}}k_{\gamma}(x)^{-2}dx \label{} \\
	&= \int_{a_i}^{a_{i+1}}\frac{(a_{i+1} - a_{i})^2}{(x(k_{\gamma}(a_{i+1}) - k_{\gamma}(a_i)) + a_{i+1}k_{\gamma}(a_i) - a_ik_{\gamma}(a_{i+1}))^2}dx \label{} \\
	&= \frac{a_{i+1} - a_{i}}{ k_{\gamma}(a_i)k_{\gamma}(a_{i+1})}. \label{BD-h-transformed_s} 	
\end{align}

For the birth-and-death process corresponding to the $h$-transformed generalized diffusion, we show the birth and death rates $\{\lambda_i^{[k_\gamma]}\}$ and $\{ \mu^{[k_\gamma]}_i \}$.
By the same argument in Section \ref{section:dual_process}, we see that
\begin{align}
	\frac{\lambda_i^{[k_\gamma]}}{\mu_i^{[k_\gamma]}} = \frac{s^{[k_\gamma]}(a_{i}) - s^{[k_\gamma]}(a_{i-1})}{s^{[k_\gamma]}(a_{i+1}) - s^{[k_\gamma]}(a_{i})} = \frac{a_{i} - a_{i-1}}{a_{i+1} - a_{i}}\cdot\frac{k_{\gamma}(a_{i+1})}{k_\gamma(a_{i-1})} \label{}
\end{align}
and
\begin{align}
	(\lambda_i^{[k_\gamma]} + \mu_i^{[k_\gamma]})^{-1} &= \frac{(s^{[k_\gamma]}(a_{i}) - s^{[k_\gamma]}(a_{i-1}))(s^{[k_\gamma]}(a_{i+1}) - s^{[k_\gamma]}(a_{i}))}{s^{[k_\gamma]}(a_{i+1}) - s^{[k_\gamma]}(a_{i-1})} k_\gamma(a_{i})^2\pi_{i}. \label{}
\end{align}
Solving this, we obtain
\begin{align}
	\lambda_i^{[k_\gamma]} = \frac{1}{\pi_{i}(a_{i+1} - a_{i})}\cdot\frac{k_{\gamma}(a_{i+1})}{k_{\gamma}(a_{i})} \quad \text{and} \quad
	\mu_i^{[k_\gamma]} = \frac{1}{\pi_{i}(a_{i} - a_{i-1})}\cdot\frac{k_{\gamma}(a_{i-1})}{k_{\gamma}(a_{i})}. \label{}
\end{align}
We will apply this formula in Section \ref{section:exBD}.

We can see how the matrix $C$ is changed under an $h$-transform.
Using \eqref{eigen_shift}, we can compute the $\theta$-eigenfunction $\psi_{\theta}^{[k_\gamma]}$ as  
\begin{align}
	\psi^{[k_\gamma]}_{\theta}(i) &= \frac{\psi_{\theta + \gamma}(i)}{k_{\gamma}(i)}  \label{} \\
	&= \frac{1}{k_{\gamma}(i)}\sum_{l=1}^{i}C(i,l)(\theta+\gamma)^{l-1} \label{} \\
	&= \frac{1}{k_{\gamma}(i)}\sum_{l=1}^{i}C(i,l)\sum_{j = 1}^{l} \binom{l-1}{j-1}  \theta^{j-1}\gamma^{l-j} \label{} \\
	&= \frac{1}{k_{\gamma}(i)}\sum_{j = 1}^{i}\theta^{j-1} \sum_{l=j}^{i} \binom{l-1}{j-1} C(i,l) \gamma^{l-j}. \label{}
\end{align}
Thus we now determine the matrix $C$ corresponding to the $h$-transformed birth-and-death process, which we denote by $C^{[k_\gamma]}$:
\begin{align}
	C^{[k_\gamma]}(i,j) = \frac{1}{k_{\gamma}(i)}\sum_{l=j}^{i} \binom{l-1}{j-1} C(i,l) \gamma^{l-j}. \label{h-transformed_C-matrix}
\end{align}

\section{Examples} \label{section:exBD}

Here we give examples of birth-and-death processes and compute the corresponding matrices $C$.

\subsection{Symmetric random walk} \label{ex:sym-RW}
Let us consider the case $\lambda_{i} = \mu_i = \kappa > 0$ for every $i \geq 1$.
In this case,
\begin{align}
	\pi_{i} = 1 \quad (i \geq 1), \quad s(i) = i/\kappa \quad (i \geq 0) \label{}
\end{align}
and
\begin{align}
	Qf(i) = \kappa (f(i+1) - 2 f(i) + f(i-1) ) \quad (i \geq 1). \label{}
\end{align}
Solving the recurrence relation $Qf = \theta f $ for $\theta \in \bR$, we have the following linearly independent solutions $k_\pm$:
\begin{align}
	k_\pm(i) := \alpha^{\kappa}_{\pm}(\theta)^{i} \quad \text{for} \quad \alpha^{\kappa}_{\pm}(\theta) = \left( 1 + \frac{\theta}{2\kappa} \right) \pm \sqrt{\left( 1 + \frac{\theta}{2\kappa} \right)^2 -1} \quad (i \geq 0), \label{eq118}
\end{align}  
where we note that $\alpha^{\kappa}_\pm(\theta)$ are the solutions of the quadratic equation: $\kappa(\alpha^2 - 2\alpha + 1) = \theta \alpha$.
We note that $\alpha^{\kappa}_+(\theta)\alpha^{\kappa}_{-}(\theta) = 1$.
Thus, it follows
\begin{align}
	\varphi_\theta(i) = \left(\frac{1}{2} - \frac{\theta}{2\sqrt{\theta^2 + 4\kappa \theta}}\right) \alpha^{\kappa}_+(\theta)^i + \left(\frac{1}{2} + \frac{\theta}{2\sqrt{\theta^2 + 4\kappa \theta}}\right) \alpha^{\kappa}_-(\theta)^i \label{}  
\end{align}
and
\begin{align}
	\psi_{\theta}(i) = \frac{1}{\sqrt{\theta^2 + 4\kappa \theta}}(\alpha^{\kappa}_+(\theta)^i - \alpha^{\kappa}_-(\theta)^i). \label{}
\end{align}
Since it holds
\begin{align}
	\lim_{i \to \infty}\frac{\varphi^{+}_{\theta}(i)}{\psi^{+}_\theta(i)} 
	= \lim_{i \to \infty} \frac{\varphi_{\theta}(i+1) - \varphi_{\theta}(i)}{\psi_{\theta}(i+1) - \psi_{\theta}(i)}
	= \frac{2\kappa \theta}{\theta + \sqrt{\theta^2 + 4\kappa \theta}} \quad (\theta > 0), \label{}
\end{align}
it follows from \eqref{exitKrein} that
\begin{align}
	\frac{2\kappa}{\theta + \sqrt{\theta^2 + 4\kappa \theta}} = \int_{0}^{\infty}\frac{\rho_{w}(d\xi)}{\xi(\theta + \xi)} \quad (\theta > 0), \label{}
\end{align}s


By the Stieltjes inversion, we have
\begin{align}
	\rho_{w}(d\theta) = \frac{\sqrt{\theta(4\kappa - \theta)}}{2\pi}1\{ 0 < \theta < 4\kappa\}d\theta. \label{}
\end{align}
For $\theta \in (0,4\kappa)$ we can easily see
\begin{align}
	\psi_{-\theta}(i) = \frac{\sin (\beta(\theta) i)}{\kappa \sin (\beta(\theta))} = \frac{1}{\kappa}U_{i-1}(\cos (\beta(\theta))), \label{}
\end{align}
where $\beta(\theta) = \arctan \left( \frac{\sqrt{4\kappa\theta - \theta^2}}{2\kappa - \theta}\right)$ and $U_{i-1}$ is the $(i-1)$-th Chebyshev polynomial of the second kind.
Note that the Chebyshev polynomials are characterized by
\begin{align}
	U_{k}(\cos t) = \frac{\sin ((k+1)t)}{\sin t} \quad (k \geq 0, \ t \in \bR) \label{}
\end{align}
(see e.g., \cite[p.218]{Specialfunction}).
Note that $\cos (\beta(\theta)) = 1 - \frac{\theta}{2\kappa}$.
When we write
\begin{align}
	U_{k}(\theta) = \sum_{l = 0}^{k}u(k,l)\theta^l, \label{}
\end{align}
it holds
\begin{align}
	\psi_{\theta}(i) &= \frac{1}{\kappa}U_{i-1}\left(1 + \frac{\theta}{2\kappa}\right) \label{} \\
	&= \frac{1}{\kappa}\sum_{l=1}^{i}u(i-1,l-1)(1 + \theta /2\kappa)^{l-1} \label{} \\
	&= \frac{1}{\kappa}\sum_{l=1}^{i}u(i-1,l-1)\sum_{j=1}^{l}\binom{l-1}{j-1} \left(\frac{\theta}{2\kappa}\right)^{j-1} \label{} \\
	&= \frac{1}{\kappa}\sum_{j=1}^{i} \left(\frac{\theta}{2\kappa}\right)^{j-1} \sum_{l=j}^{i}u(i-1,l-1)\binom{l-1}{j-1}. \label{}
\end{align}
Thus it follows
\begin{align}
	C(i,j) =  \frac{1}{2^{j-1}\kappa^{j}}
	\sum_{l=j}^{i} \binom{l-1}{j-1} u(i-1,l-1). \label{}
\end{align}
Note that from \cite[p.219]{Specialfunction}, it holds
\begin{align}
	u(i,j) = 
	\begin{cases}
		(-1)^{n-k} \binom{n+k}{n-k}2^{2k} & (i = 2n,\ j = 2k, \ 0 \leq  k \leq n), \\
		(-1)^{n-k} \binom{n+k+1}{n-k}2^{2k+1} & (i = 2n+1,\  j = 2k+1, \ 0 \leq  k \leq n), \\
		0 & \text{otherwise.}
	\end{cases}
	\label{}
\end{align}

\subsection{Asymmetric random walk} \label{Ex:asymRW}

The asymmetric random walk with birth rate $\lambda$ and death rate $\mu$ can be obtained from the symmetric one by $h$-transform.
We keep the notation of Section \ref{ex:sym-RW}.
Recall that for $\theta > 0$, $k_+(i)$ and $k_-(i)$ are positive increasing and decreasing $\theta$-eigenfunctions given in \eqref{eq118}, respectively.

Fix $\gamma \geq 0$. From \eqref{BD-h-transformed_s}, the speed measure $\pi^{\pm}$ and scale function $s^{\pm}$ of the symmetric random walk of the previous section transformed by $k_{\pm}(i) := \alpha^{\kappa}_{\pm}(\gamma)^{i} $ are
\begin{align}
	\pi^{\pm}_i = \alpha^{\kappa}_{\pm}(\gamma)^{2i}, \quad s^{\pm}(i) - s^{\pm}(i-1) = \kappa^{-1}\alpha^{\kappa}_{\pm}(\gamma)^{-2i+1} \quad (i \geq 1). \label{h-ms}
\end{align}
Thus its birth and death rates are given by
\begin{align}
	\lambda^{\pm}_{i} = \kappa \alpha^{\kappa}_{\pm}(\gamma), \quad \mu^{\pm}_i = \kappa \alpha^{\kappa}_{\pm}(\gamma)^{-1} = \kappa\alpha^{\kappa}_{\mp}(\gamma) \quad (i \geq 0). \label{h-lm}
\end{align}

The transition probability and the first hitting time densities can be easily obtained by \eqref{h-transformed_density}.
We can also compute the matrix $C$ corresponding to these processes, which we denote by $C^{\pm}_{\gamma}$.
By \eqref{h-transformed_C-matrix}, we have
\begin{align}
	C^{\pm}_{\gamma}(i,j) &= \frac{1}{\alpha^{\kappa}_{\pm}(\gamma)^i}\sum_{l=j}^{i}\binom{l-1}{j-1}C(i,l)\gamma^{l-j}. \label{} \\
	&= \frac{1}{\alpha^{\kappa}_{\pm}(\gamma)^i}\sum_{l=j}^{i}\binom{l-1}{j-1}\frac{\gamma^{l-j}}{2^{l-1}\kappa^{l}}\sum_{m = l}^{i}\binom{m-1}{l-1}u(i-1,m-1) \label{} \\
	&= \frac{1}{\alpha^{\kappa}_{\pm}(\gamma)^i} 
	\sum_{m = j}^{i} u(i-1,m-1)
	\sum_{l=j}^{m} \binom{l-1}{j-1} \binom{m-1}{l-1} \frac{\gamma^{l-j}}{2^{l-1}\kappa^{l}}. \label{} 
\end{align}

We may represent every asymmetric random walk as an $h$-transform of a symmetric one.
Let $\lambda, \mu > 0$. Set $\kappa = \sqrt{\lambda \mu}$.
When $\lambda > \mu$, we can take $\gamma > 0$ so that
\begin{align}
	\alpha^{\kappa}_{+}(\gamma)^2 = \frac{\alpha^{\kappa}_+(\gamma)}{\alpha^{\kappa}_-(\gamma)} = \frac{\lambda}{\mu} \label{}
\end{align}
since $\alpha^{\kappa}_+ :[0,\infty) \to [1,\infty)$ is increasing and homeomorphic.
Then it holds
\begin{align}
	\lambda^+_i = \kappa \alpha^{\kappa}_+(\gamma) = \lambda
	\quad \text{and} \quad
	\mu^+_i = \kappa \alpha^{\kappa}_+(\gamma)^{-1} = \mu. \label{}
\end{align}

When $\lambda < \mu$, by the same way as above, we can take $\kappa = \sqrt{\lambda \mu}$ and $\gamma > 0$ such that $\alpha^{\kappa}_-(\gamma) = \lambda / \mu$. Then it holds
\begin{align}
	\lambda^-_i = \kappa \alpha^{\kappa}_{-}(\gamma) = \lambda \quad \text{and} \quad \mu^-_i = \kappa \alpha^{\kappa}_{-}(\gamma)^{-1} = \mu. \label{}
\end{align}

\appendix

\section{Reproduction of initial distributions for one-dimensional diffusions} \label{section:1-dimDiff}

\subsection{Initial distributions with square-integrable densities}

Let $X$ be a $\frac{d}{dm}\frac{d}{ds}$-diffusion on $[0,b)$ stopped at $0$.
We assume the boundary $0$ is accessible, that is, regular or exit.
Define the function $u = \psi_{\lambda}$ as the unique solution of the following differential equation:
\begin{align}
	\frac{d}{dm}\frac{d}{ds}u= \lambda u, \quad  u(0) = 0, \quad \frac{d}{ds}u(0) = 1 \quad (0 < x < b, \lambda \in \bR). \label{}
\end{align}
From the spectral theory (see e.g., Coddington and Levinson \cite{Coddington}), it is known that there exists a measure $\sigma$ on $[0,\infty)$ and for every function $f:(0,b) \to \bR$ such that
\begin{align}
	\int_{0}^{b}f(x)^2dm(x) < \infty \quad \text{and} \quad f(x) = 0 \quad (x \geq R) \quad \text{for some } R \in (0,b), \label{}
\end{align}
it holds
\begin{align}
	\int_{0}^{b}f(x)^2dm(x) = \int_{0}^{\infty}\hat{f}(\lambda)^2\sigma(d\lambda), \label{eq104}
\end{align}
where $\hat{f}$ is the generalized Fourier transform:
\begin{align}
	\hat{f}(\lambda) := \int_{0}^{b}f(x)\psi_{-\lambda}(x)dm(x).
\end{align}
We call $\sigma$ the spectral measure of the $\frac{d}{dm}\frac{d}{ds}$.
The transform has its inverse. For a function $g: (0,\infty) \to \bR$ with
\begin{align}
	\int_{0}^{\infty}g(\lambda)^2\sigma(d\lambda) < \infty \quad \text{and} \quad g(\lambda) = 0 \quad (\lambda \geq R) \quad \text{for some } R > 0, \label{}
\end{align}
we define
\begin{align}
	\check{g}(x) := \int_{0}^{\infty}g(\lambda)\psi_{-\lambda}(x)\sigma(d\lambda). \label{}
\end{align}
Then it holds
\begin{align}
	\int_{0}^{b}\check{g}(x)^2dm(x) = \int_{0}^{\infty}g(\lambda)^2\sigma(d\lambda). \label{eq106}
\end{align}
From \eqref{eq104}, these transforms are naturally extended to the unitary transforms between $L^2(dm)$ and $L^2(\sigma)$. We denote the extended ones by the same symbol $\hat{\ }$ and $\check{\ }$.

In this section, we assume the existence of the Laplace transform of the spectral measure:
\begin{align}
	\mathrm{(S)} \quad \int_{0}^{\infty}\mathrm{e}^{-\lambda t}\rho(d\lambda) < \infty \quad (t > 0). \label{}
\end{align}
A sufficient condition for $\mathrm{(S)}$ is the following:
\begin{Prop}[{\cite{report}}]
	Suppose $|s(0)| < \infty$ and
	\begin{align}
		m(x,c] \leq C (s(x) - s(0))^{-\delta} \quad (0 < x < c) \label{} 
	\end{align}
	for some $C > 0$, $0 < c < b$ and $0 < \delta < 1$. Then the condition $\mathrm{(S)}$ holds.
\end{Prop}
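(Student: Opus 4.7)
The plan is to prove condition $(\mathrm{S})$ (where the symbol $\rho$ there is to be read as the spectral measure $\sigma$ just introduced) by translating the local hypothesis on the speed measure $m$ near the accessible boundary $0$ into a polynomial tail estimate for $\sigma$ at infinity, via Krein's spectral theory of strings and the duality relation \eqref{spectral-correspondence}.

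First, I would reduce to natural scale. The change of variable $y=s(x)-s(0)$ is legitimate because $|s(0)|<\infty$; it turns the operator $\frac{d}{dm}\frac{d}{ds}$ into the Krein string operator $\frac{d}{d\tilde m}\frac{d}{dy}$ on $[0,s(b)-s(0))$ with the push-forward speed measure $\tilde m$, keeps $0$ accessible, and reduces the hypothesis to $\tilde m(y,c']\le Cy^{-\delta}$ for $0<y<c'$ where $c'=s(c)-s(0)$. Invoking the duality \eqref{spectral-correspondence} (applied to generalized diffusions rather than only birth-and-death processes) gives $\sigma(d\lambda)=\lambda\,\sigma^{N}_{\tilde m^{-1}}(d\lambda)$ on $(0,\infty)$, where $\sigma^{N}_{\tilde m^{-1}}$ is the Neumann spectral measure of the dual string $\tilde m^{-1}$. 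Hence $(\mathrm{S})$ reduces to
\[
\int_0^\infty \lambda\,e^{-\lambda t}\,\sigma^{N}_{\tilde m^{-1}}(d\lambda)<\infty \quad (t>0),
\]
which is implied by any polynomial growth bound $\sigma^{N}_{\tilde m^{-1}}([0,\lambda])\le C'\lambda^{\alpha}$ at infinity (one then concludes by an elementary integration by parts).

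Second, to produce such a polynomial bound I would analyze the spectral characteristic function $h_{\tilde m^{-1}}$ of \eqref{char_func}, which is the Stieltjes transform of $\sigma^{N}_{\tilde m^{-1}}$ up to an additive constant. The hypothesis $\tilde m(y,c']\le Cy^{-\delta}$ translates, via the definition of the dual string, into an upper bound of the form $\tilde m^{-1}(\xi)\lesssim \xi^{-1/\delta}$ at small $\xi$, and, since $1/\delta>1$, a Kasahara-type Tauberian theorem (classical in Krein's theory of strings; see e.g.\ \cite{KotaniWatanabe}) yields the desired polynomial growth of $\sigma^{N}_{\tilde m^{-1}}([0,\lambda])$ at infinity. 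Substituting back through the duality then proves $(\mathrm{S})$.

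The main obstacle is the Tauberian passage from a one-sided inequality on $\tilde m$ to a polynomial tail for $\sigma^{N}_{\tilde m^{-1}}$. Kasahara's classical theorem is typically formulated for strings that are regularly varying at the boundary, whereas the present hypothesis is only an upper bound. One therefore needs either a monotone comparison argument, bracketing $\tilde m$ above by a regularly varying test string and using monotonicity of the spectral distribution in the string under Krein correspondence, or a more direct analytic estimate of $h_{\tilde m^{-1}}(\lambda)$ as $\lambda\to\infty$ from the Volterra equations \eqref{phi}--\eqref{psi} applied to the dual string. Care must also be taken with the boundary classification at $\ell(m)$ so that the duality relation and the characteristic-function representation are correctly applied.
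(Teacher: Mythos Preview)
The paper does not actually prove this proposition: it is quoted verbatim from the external reference \cite{report} and stated without proof. There is therefore no argument in the paper to compare your proposal against.

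That said, your outline is a reasonable route. The reduction to natural scale and the appeal to the Krein duality \eqref{spectral-correspondence} are standard and correct. The genuine work, as you identify, is the Tauberian step: passing from the one-sided bound $\tilde m(y,c']\le Cy^{-\delta}$ to a polynomial growth estimate on $\sigma^{N}_{\tilde m^{-1}}([0,\lambda])$. Your own caveat is apt---Kasahara's theorem in its usual form requires regular variation, not merely an upper bound---so to make the argument complete you would need either a monotone comparison of strings (which is available in Krein's theory but requires some care to state for the Neumann spectral measure) or a direct estimate of the characteristic function $h_{\tilde m^{-1}}(\lambda)$ as $\lambda\to\infty$. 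Either route is workable, but neither is a one-line citation, so the proposal as written is a sketch rather than a proof.
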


Under the condition $\mathrm{(S)}$, we may have the spectral representation of the first hitting time densities.
\begin{Prop}[{\cite[Proposition 2.1]{Yano:Excusionmeasure}}] \label{hitting_density}
	Assume $\mathrm{(S)}$ holds.
	Then for any $0 < x < b$ the distribution of $T_0$ under $\bP_{x}$ has a density $f_x(t)$ on $(0,\infty)$ w.r.t.\ the Lebesgue measure, that is, the following hold:
	\begin{align}
		\bP_x[T_0 \in dt] = f_x(t)dt  \quad (0 < x < b, \ t > 0). \label{eq19}
	\end{align}
	The first hitting time densities have a spectral representation:
	\begin{align}
		f_x(t) = \int_{0}^{\infty}\mathrm{e}^{-\lambda t}\psi_{-\lambda}(x)\rho(d\lambda) \quad (0 < x < b,\ t > 0). \label{eq20}
	\end{align}
	and have another representation:
	\begin{align}
		f_{x}(t) = \left.\frac{d}{ds(y)}p(t,x,y)\right|_{y=0} \quad (0 < x < b, \ t > 0). \label{}
	\end{align}
\end{Prop}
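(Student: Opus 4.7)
The plan is to prove the three assertions in the order (iii), (ii), (i), using the forward equation for the killed transition density together with the spectral representation \eqref{diffusion_transition}. Assumption $\mathrm{(S)}$ enters only in justifying the interchange of limits/derivatives with the spectral integral.

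\textbf{Step 1 (Boundary flux formula).} Since $0$ is accessible and $X$ is stopped at $0$, the process lives on $(0,b)$ before time $T_0$ and we have
\begin{align}
\bP_x[T_0 > t] = \int_{0}^{b} p(t,x,y)\, dm(y). \label{}
\end{align}
Applying the forward Kolmogorov equation $\partial_t p(t,x,y) = \tfrac{d}{dm(y)}\tfrac{d}{ds(y)} p(t,x,y)$ and integrating against $dm(y)$ over $(0,b)$ gives
\begin{align}
\partial_t \bP_x[T_0 > t] = \left[\frac{d}{ds(y)} p(t,x,y)\right]_{y=0}^{y=b^-}. \label{}
\end{align}
The boundary term at $b$ vanishes (because the minimal semigroup at a natural/inaccessible boundary has no flux, and at a regular endpoint one imposes the Dirichlet condition encoded in the chosen spectral measure), so
\begin{align}
-\partial_t \bP_x[T_0 > t] = \left.\frac{d}{ds(y)} p(t,x,y)\right|_{y=0}, \label{}
\end{align}
which is \eqref{} (the third representation). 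This already shows $t \mapsto \bP_x[T_0 \le t]$ is absolutely continuous, proving (i), with density equal to the right-hand side.

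\textbf{Step 2 (Spectral evaluation of the flux).} Starting from \eqref{diffusion_transition}, we would like to differentiate in $s(y)$ under the integral and evaluate at $y=0$. Since $\psi_{-\lambda}(0)=0$, we have $p(t,x,0)=0$, so
\begin{align}
\left.\frac{d}{ds(y)} p(t,x,y)\right|_{y=0} = \lim_{y\to 0+}\int_{0}^{\infty} e^{-\lambda t}\psi_{-\lambda}(x)\,\frac{\psi_{-\lambda}(y)}{s(y)-s(0)}\,\rho(d\lambda). \label{}
\end{align}
From the integral equation defining $\psi_{-\lambda}$ and a Gronwall-type bound, one shows that $\psi_{-\lambda}(y)/(s(y)-s(0))$ is dominated, for $y$ in a small right-neighborhood of $0$, by a function of $\lambda$ of the form $e^{c\lambda}$ (times something integrable in $\lambda$ after multiplication by $e^{-\lambda t}$); here $\mathrm{(S)}$ and an $\varepsilon$-argument of splitting $t = t/2 + t/2$ are used to get uniform domination. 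Since $\psi_{-\lambda}(y)/(s(y)-s(0))\to \tfrac{d^+}{ds}\psi_{-\lambda}(0)=1$ pointwise in $\lambda$, dominated convergence yields
\begin{align}
\left.\frac{d}{ds(y)} p(t,x,y)\right|_{y=0} = \int_{0}^{\infty} e^{-\lambda t}\psi_{-\lambda}(x)\,\rho(d\lambda), \label{}
\end{align}
establishing \eqref{eq20} and, combined with Step~1, the density formula \eqref{eq19}.

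\textbf{Main obstacle.} The two analytic steps that need care are (a) justifying the boundary-term vanishing at $b$ for general strings (which is handled by boundary classification and the characterization of $\rho$ via the Dirichlet condition at $0$), and (b) the dominated-convergence argument producing a $\lambda$-uniform bound on $\psi_{-\lambda}(y)/(s(y)-s(0))$ near $y=0$; this is where $\mathrm{(S)}$ does the real work, through the trick of absorbing one factor of $e^{-\lambda t/2}$ into the dominating function and leaving the other for integrability against $\rho$. Everything else is the direct translation of the birth-and-death computation in Proposition \ref{BD-hitting_density} to the generalized-diffusion setting.
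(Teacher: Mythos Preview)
The paper does not give its own proof of this proposition; it is quoted verbatim from \cite[Proposition~2.1]{Yano:Excusionmeasure}. So there is no in-paper argument to compare against directly. That said, your outline is exactly the continuous-state analogue of the paper's proof of Proposition~\ref{BD-hitting_density}: there the forward equation gives $\partial_t P_t(i,0)=\mu_1 P_t(i,1)$, which is the discrete boundary-flux identity, and then the spectral representation of $P_t(i,1)$ together with $\psi_{-\theta}(1)=1/\mu_1$ yields \eqref{hitting-density_BD}. Your Step~1 (flux at $0$) and Step~2 (evaluate the flux spectrally using $\tfrac{d}{ds}\psi_{-\lambda}(0)=1$) replicate this in the diffusion setting, so the strategy is sound and faithful to the paper's philosophy.

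Two places deserve a firmer hand than your sketch provides. First, in Step~1 you write that the flux formula ``already shows $t\mapsto\bP_x[T_0\le t]$ is absolutely continuous''; strictly, computing $-\partial_t\bP_x[T_0>t]$ pointwise does not by itself give absolute continuity. You need either to check that the flux is locally integrable in $t$ and invoke the fundamental theorem of calculus, or (cleaner) to first establish the spectral formula \eqref{eq20} as a nonnegative, $dt$-integrable candidate and then identify it with the law of $T_0$. Second, the domination in Step~2 is the heart of the matter, and your bound ``of the form $e^{c\lambda}$'' is too coarse as stated: the standard Gronwall iteration on the integral equation for $\psi_{-\lambda}$ gives, for $y$ in a fixed neighbourhood of $0$, a bound of the type $|\tfrac{d}{ds}\psi_{-\lambda}(y)|\le 1+\lambda\, m(0,y]\,\sup_{w\le y}|\psi_{-\lambda}(w)|$, and one then needs to couple this with the Cauchy--Schwarz estimate
\[
\int_0^\infty e^{-\lambda t}|\psi_{-\lambda}(x)|\,\rho(d\lambda)
\le \Bigl(\int_0^\infty e^{-\lambda t}\rho(d\lambda)\Bigr)^{1/2} p(t,x,x)^{1/2},
\]
which is finite by $\mathrm{(S)}$. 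Your ``split $t=t/2+t/2$'' idea is the right device here; just make the pairing with the above inequality explicit. The vanishing of the boundary term at $b$ is standard boundary classification, as you say.
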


Let
\begin{align}
	\cP_2 := \{ \nu \in \cP(0,b)  \mid \nu = \eta dm \quad \text{for some } \eta \in L^2(dm)   \}.
\end{align}
Applying the spectral representation, we can reproduce the initial distribution on $\cP_2$.
Before we state the theorem, we recall the relation of the Laplace transform of the signed measures and the completely monotone functions.
Let $\cM$ be a set of signed measures on $(0,\infty)$ with the Laplace transforms, that is, for every $\mu \in \cM$ it holds 
\begin{align}
	\cL\mu(t) := \int_{0}^{\infty}\mathrm{e}^{-\lambda t}|\mu|(d\lambda) < \infty \quad (t > 0), \label{}
\end{align} 
where $|\mu|$ denotes the total variation measure of $\mu$.
We say that a function $f:(0,\infty) \to \bR$ is a completely monotone when 
$(-1)^kf^{(k)}(t) \geq 0 \ (k \geq 0, t > 0)$.
We denote by $\mathrm{CM}$ the set of the difference of two completely monotone functions:
\begin{align}
	\mathrm{CM} := \{ f - g \mid f,g: \ \text{completely monotone} \}. \label{}
\end{align}
By the classical Bernstein theorem (see e.g., \cite[Theorem 1.4]{bernstein_functions}), the Laplace transform $\cL$ is a bijection from $\cM$ to $\mathrm{CM}$. We denote by $\cL^{-1}: \mathrm{CM} \to \cM$ the inverse transform of $\cL$.

\begin{Thm}
	Assume the condition $\mathrm{(S)}$ holds.
	Let $\nu \in \cP_2$. Then $f_{\nu} \in \mathrm{CM}$ and it holds
	\begin{align}
		\frac{d\nu}{dm}(x) = \int_{0}^{\infty}\psi_{-\lambda}(x)(\cL^{-1}f_{\nu})(d\lambda)
		\quad m\text{-a.e.} \label{eq132}
	\end{align}
	Therefore we can reproduce the initial distribution.
\end{Thm}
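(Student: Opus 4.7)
The plan is to identify $\cL^{-1} f_\nu$ explicitly as the signed measure $\hat\eta(\lambda)\rho(d\lambda)$, where $\eta:= d\nu/dm \in L^2(dm)$, and then apply the inverse generalized Fourier transform to recover $\eta$. Writing $f_\nu(t) = \int_0^b \eta(x) f_x(t)\,dm(x)$ and substituting the spectral representation of $f_x$ from Proposition~\ref{hitting_density}, my goal is to obtain
\[
f_\nu(t) = \int_0^\infty e^{-\lambda t}\hat\eta(\lambda)\,\rho(d\lambda),
\]
where $\hat\eta$ denotes the generalized Fourier transform. This expresses $f_\nu$ as the Laplace transform of the signed measure $\mu_\nu(d\lambda):=\hat\eta(\lambda)\rho(d\lambda)$.

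Next, I would verify that $\mu_\nu \in \cM$ so that $f_\nu \in \mathrm{CM}$ and $\cL^{-1}f_\nu = \mu_\nu$ by the Bernstein uniqueness already cited in the paper. Cauchy--Schwarz with respect to $\rho$ gives
\[
\int_0^\infty e^{-\lambda t}|\hat\eta(\lambda)|\,\rho(d\lambda) \leq \Bigl(\int_0^\infty e^{-2\lambda t}\rho(d\lambda)\Bigr)^{1/2}\|\hat\eta\|_{L^2(\rho)},
\]
and both factors are finite, the first by hypothesis $(\mathrm{S})$ and the second by the Plancherel identity \eqref{eq104}. Substituting $\cL^{-1}f_\nu = \hat\eta\,\rho$ into the right-hand side of \eqref{eq132} turns the claim into
\[
\eta(x) = \int_0^\infty \psi_{-\lambda}(x)\hat\eta(\lambda)\,\rho(d\lambda) \quad m\text{-a.e.},
\]
which is precisely the unitary inversion formula $\check{\hat\eta}=\eta$ in $L^2(dm)$ derived from \eqref{eq104}--\eqref{eq106}.

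The one genuinely delicate point is the Fubini interchange used to produce the spectral representation of $f_\nu$, because $\psi_{-\lambda}(x)$ is not bounded jointly and $\hat\eta$ is only in $L^2(\rho)$. My plan is to handle this by approximation: first take $\eta_n$ bounded and compactly supported in $(0,b)$, for which the joint integrability $\int_0^b\int_0^\infty e^{-\lambda t}|\eta_n(x)\psi_{-\lambda}(x)|\rho(d\lambda)\,dm(x)$ is immediate (estimate the inner integral by $p(t,x,x)^{1/2}(\int e^{-\lambda t}\rho)^{1/2}$ via Cauchy--Schwarz and use that $p(t,x,x)$ is jointly continuous on compacts); then pass to $n\to\infty$ using that $\eta \mapsto f_{\eta\,dm}(t)$ is continuous $L^2(dm)\to\bR$ for each fixed $t>0$ (again via Cauchy--Schwarz and joint continuity of $p$) while $\eta\mapsto\hat\eta$ is an $L^2$-isometry.

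Once the Fubini interchange is justified, the remaining steps are purely formal identifications, so the principal obstacle is the technical approximation argument outlined above; everything else follows directly from the spectral representation of the hitting density in Proposition~\ref{hitting_density}, the Plancherel identity, and Bernstein's theorem.
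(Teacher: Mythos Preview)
Your argument is correct, and its overall architecture matches the paper's: show $f_\nu(t)=\int_0^\infty e^{-\lambda t}\hat\eta(\lambda)\,\rho(d\lambda)$, invoke Bernstein to identify $\cL^{-1}f_\nu=\hat\eta\,\rho$, then apply the inverse generalized Fourier transform. The one genuine difference is in how the spectral representation of $f_\nu$ is obtained. You push the integral through $\int_0^b\eta(x)\int_0^\infty e^{-\lambda t}\psi_{-\lambda}(x)\rho(d\lambda)\,dm(x)$ by a Fubini argument, which as you note requires an approximation step because joint integrability is not immediate. The paper sidesteps this entirely: it observes that condition $(\mathrm S)$ gives $\mathbf e_t:=e^{-\lambda t}\in L^2(\rho)$, so that the spectral representation \eqref{eq20} reads $f_x(t)=\check{\mathbf e_t}(x)$, and then the desired identity is simply the Parseval relation
\[
f_\nu(t)=\langle \eta,\check{\mathbf e_t}\rangle_{L^2(dm)}=\langle \hat\eta,\mathbf e_t\rangle_{L^2(\rho)}=\int_0^\infty e^{-\lambda t}\hat\eta(\lambda)\,\rho(d\lambda),
\]
coming for free from unitarity of the transform. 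This is shorter and avoids your approximation argument altogether. On the other hand, your Cauchy--Schwarz check that $\hat\eta\,\rho\in\cM$ (i.e.\ that the total variation has a Laplace transform) is a point the paper glosses over but which is indeed needed to invoke $\cL^{-1}$, so your version is more careful there.
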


\begin{proof}
	Let $\nu \in \cP_2$ with $\nu = \eta dm$ for $\eta \in L^2(dm)$.
	Set $\mathbf{e}_t(\lambda) := \mathrm{e}^{-\lambda t} \ (t > 0, \lambda > 0)$.
	Note that by $\mathrm{(S)}$, the function $\mathbf{e}_t(\lambda) \in L^2(\rho)$. 
	Then from Proposition \ref{hitting_density} and $\mathrm{(S)}$, it holds $f_x(t) = \check{\mathbf{e}_t}(x)$.
	Since the generalized Fourier transform is unitary, the first hitting time density $f_\nu$ under $\nu$ satisfies
	\begin{align}
		f_\nu(t) :&= \int_{0}^{b}f_x(t)\nu(dx)  \label{} \\
		&= \int_{0}^{b}\check{\mathbf{e}_t}(x)\eta(x)dm(x) \label{} \\
		&= \int_{0}^{\infty}\mathrm{e}^{-\lambda t}\hat{\eta}(\lambda)\rho(d\lambda). \label{}
	\end{align}
	Thus, it holds
	\begin{align}
		(\cL^{-1}f_{\nu})(d\lambda) = \hat{\eta}(\lambda)\sigma(d\lambda). \label{}
	\end{align}
	By taking the inverse of the generalized Fourier transform, we obtain \eqref{eq132}.
\end{proof}

\subsection{Examples}
Here we see examples of one-dimensional diffusions whose transition density, first hitting time densities and generalized
Fourier transform can be computed explicitly.

\subsubsection{Brownian motion}

Let us consider a standard Brownian motion $B$ on $[0,\infty)$ stopped at $0$.
Its corresponding string $\tilde{m}$ and the scale function $s$ can be given by
\begin{align}
	\tilde{m}(x) = 2x, \quad s(x) = x \quad (x > 0). \label{}
\end{align}
Then $m(x) := \tilde{m} \circ s^{-1}(x) = 2x$ and its dual string is $m^{-1}(x) := x/2$.
The eigenfunctions $\varphi^{\ast}_{\lambda}$ and $\psi^{\ast}_{\lambda} \ (\lambda \in \bC)$ for $\frac{d}{dm^{-1}}\frac{d}{dx} = 2\frac{d^2}{dx^2}$ is
\begin{align}
	\varphi^{\ast}_{\lambda}(x) &= \left\{
	\begin{aligned}
		&\cosh (\sqrt{\lambda / 2}x) & (\lambda > 0), \\
		&1  &  ( \lambda = 0), \\
		&\cos (\sqrt{-\lambda / 2} x) &  ( \lambda < 0)
	\end{aligned}
	\right.
	\label{}
\end{align}
and
\begin{align}
	\psi^{\ast}_{\lambda}(x) &= \left\{
	\begin{aligned}
		&\sqrt{2 / \lambda} \sinh (\sqrt{\lambda / 2} x)  &  ( \lambda > 0), \\
		&x  &  ( \lambda = 0), \\
		&\sqrt{-2 / \lambda} \sin (\sqrt{-\lambda / 2} x) &  ( \lambda < 0).
	\end{aligned}
	\right.
	\label{}
\end{align}
Thus the spectral characteristic function $h$ for $m^{-1}$ is
\begin{align}
	h(\lambda) = \sqrt{\frac{2}{\lambda}} \quad (\lambda > 0). \label{}
\end{align}
From Stieltjes inversion, the spectral measure $\sigma$ of $\frac{d}{dm^{-1}}\frac{d}{dx}$ is given by
\begin{align}
	\sigma(d\lambda) = \frac{1}{\pi} \sqrt{\frac{2}{\lambda}}d\lambda \quad (\lambda > 0), \label{}
\end{align}
and thus the spectral measure $\rho$ for $\frac{d}{dm}\frac{d}{dx}$ is 
\begin{align}
	\rho(d\lambda) = \frac{1}{\pi} \sqrt{2 \lambda}d\lambda \quad (\lambda > 0). \label{eq124}
\end{align}
Since the eigenfunction $\psi_{\lambda} \ (\lambda > 0)$ for $\frac{d}{dm}\frac{d}{dx}$ is
\begin{align}
	\psi_{-\lambda}(x) 
	= \frac{1}{\sqrt{2\lambda}} \sin (\sqrt{2\lambda} x), \label{eq123}
\end{align}
we have the spectral representation of the transition density w.r.t. $dm$:
\begin{align}
	p(t,x,y) &= \frac{1}{\pi \sqrt{2}}
	\int_{0}^{\infty}\mathrm{e}^{-\lambda t} \sin (\sqrt{2\lambda} x) \sin (\sqrt{2\lambda} y)\frac{d\lambda}{\sqrt{\lambda}} \label{} \\
	&= \frac{1}{\pi}
	\int_{0}^{\infty}\mathrm{e}^{-\lambda^2 t/2} \sin (\lambda x) \sin (\lambda y) d\lambda 
	\quad (t > 0, x ,y > 0). \label{}
\end{align}
From Proposition \ref{hitting_density}, the first hitting time densities are 
\begin{align}
	f_x(t) = \left.\frac{d^+}{ds(y)}p(t,x,y) \right|_{y=0} 
	= \frac{1}{\pi}
	\int_{0}^{\infty}\mathrm{e}^{-\lambda t} \sin (\sqrt{2\lambda} x) d\lambda. \label{} 
\end{align}
The generalized Fourier transform is given By
\begin{align}
	\hat{f}(\lambda) &= \int_{0}^{\infty}f(x)\psi_{-\lambda}(x)dm(x) \label{} \\
	&= \sqrt{\frac{2}{\lambda}}\int_{0}^{\infty}f(x)\sin (\sqrt{2\lambda}x)dx \quad (f \in L^2(dm), \ \lambda > 0). \label{}
\end{align}
Its inverse transform is
\begin{align}
	\check{g}(x) &= \int_{0}^{\infty}g(\lambda)\psi_{-\lambda}(x)\rho(d\lambda) \label{} \\
	&= \frac{1}{\pi}\int_{0}^{\infty}g(\lambda) \sin (\sqrt{2\lambda}x)d\lambda \quad (g \in L^2(\rho), \ x > 0). \label{}
\end{align}
\subsubsection{Brownian motion with a constant drift}

We consider a Brownian motion $B$ with a constant drift.
Similarly with the case of asymmetric random walks in Section \ref{Ex:asymRW}, the process $X$ can be obtained as an $h$-transform of a standard Brownian motion.

Fix $\gamma \geq 0$.
Then we may easily see that the functions
\begin{align}
	k_{\pm}(x) := \mathrm{e}^{\pm\sqrt{2\gamma} x} \quad (x > 0). \label{}
\end{align}
are the positive increasing and decreasing $\gamma$-eigenfunctions for $\frac{1}{2}\frac{d^2}{dx^2}$ and $k_\pm(0) = 1$, respectively.
The $h$-transformed speed measure $dm_\pm$ and scale function $s_\pm$ is
\begin{align}
	dm_\pm(x) = 2\mathrm{e}^{\pm2\sqrt{2\gamma} x}dx \quad \text{and}
	\quad
	ds_\pm(x) = \mathrm{e}^{\mp2\sqrt{2\gamma} x}dx, \label{}
\end{align}
and therefore we obtain
\begin{align}
	\frac{d}{dm_\pm}\frac{d}{ds_\pm} = \frac{1}{2}\frac{d^2}{dx^2} \pm \sqrt{2\gamma}\frac{d}{dx}, \label{}
\end{align}
which are the generator of a Brownian motion with constants drifts.

We may compute the transition density and the first hitting time densities by 
\eqref{eigen_shift}, \eqref{spectral_shift}, 
\eqref{diffusion_transition} and \eqref{eq20}.
For $\psi_{\lambda}$ and $\rho$ given in \eqref{eq123} and \eqref{eq124}, the generalized Fourier transform is
\begin{align}
	\hat{f}(\lambda) &= \int_{0}^{\infty}f(x)\frac{\psi_{-\lambda + \gamma}(x)}{k_{\pm}(x)}dm_{\pm}(x) \label{} \\
	&= \sqrt{\frac{2}{\lambda - \gamma}} \int_{0}^{\infty}f(x)
	\sin (\sqrt{2(\lambda - \gamma)} x) \mathrm{e}^{\pm 2 \sqrt{2\gamma}x}dx  \quad (f \in L^2(dm_{\pm}), \ \lambda > 0). \label{}
\end{align}
Its inverse transform is
\begin{align}
	g(x) &= \int_{0}^{\infty}g(\lambda)\frac{\psi_{-\lambda + \gamma}(x)}{k_{\pm}(x)}\rho(d(\lambda - \gamma)) \label{} \\
	&= \frac{1}{\pi}\int_{\gamma}^{\infty}g(\lambda )\sin (\sqrt{2(\lambda - \gamma)}x) \mathrm{e}^{\mp 2 \sqrt{2\gamma} x}d\lambda
	\quad (g \in L^2(\rho_{\gamma}),\  x > 0 ), \label{}
\end{align}
where $\rho_{\gamma}(d\lambda) = \rho(d(\lambda - \gamma))$.

\bibliography{report05.bbl} 
\bibliographystyle{plain}

\end{document}